\newtheorem{thm}{Theorem}[section]
\newtheorem{lemma}[thm]{Lemma}
\newtheorem{corollary}[thm]{Corollary}
\theoremstyle{definition}
\newtheorem{defn}{Definition}[section]
\theoremstyle{remark}
\newtheorem{remark}{Remark}
\title[Wigner type laws for structured random matrices]{Wigner type laws for structured random matrices}
\author{
Tapesh Yadav
}
\begin{document}


\begin{abstract}
    For a sufficiently nice 2 dimensional shape, we define its approximating matrix (or patterned matrix) as a random matrix with iid entries arranged according to a given pattern. For large approximating matrices, we observe that the eigenvalues roughly follow an underlying distribution. This phenomenon is similar to the classical observation on Wigner matrices. We prove that the moments of such matrices converge asymptotically as the size increases and equals to the integral of a combinatorially-defined function which counts certain paths on a finite grid. 
     We also consider the case of several independent patterned matrices. Under a specific set of conditions, these matrices admit asymptotic freeness with respect to full-filled independent square random matrices. 
     In our conclusion, we present several open problems.
\end{abstract}
\maketitle


\section{Introduction}

We begin with a classical result by Wigner. Let $Z_N = (1/\sqrt{N})\{Z_{i,j}\}_{i,j=1}^N$ be a random matrix such that $\{Z_{i,j}\}_{1 \leq i \leq j \leq N}$ are iid standard Gaussian random variables and $Z_{i,j} = Z_{j,i}$ for all $i > j$. Such random matrices are referred to as the Gaussian orthogonal ensemble (GOE). For a matrix $A$ of size $N$, let $tr(A)=\frac{1}{N}Tr(A)$ denote normalized trace of the matrix $A$. The Wigner semicircle law (\cite{mingo2017free}) states that for a sequence $\{Z_N\}_{N=1}^\infty$ of GOE random matrices, and for all non negative integers $k$,
    $$ \lim_{N \to \infty} E(tr(Z_N^k)) = \frac{1}{2\pi} \int_{-2}^2 t^k \sqrt{4-t^2} dt$$
    
In other words, the moments of $Z_N$ goes to the moments of the semicircular distribution. This tells us that for large $N$, the eigenvalue distribution of the random matrix $Z_N$ converges to the semicircular law with perhaps one exceptional large eigenvalue (Figure \ref{fig0}).

\begin{figure}[!]
    \centering
\includegraphics[width=12cm, height=5.5cm]{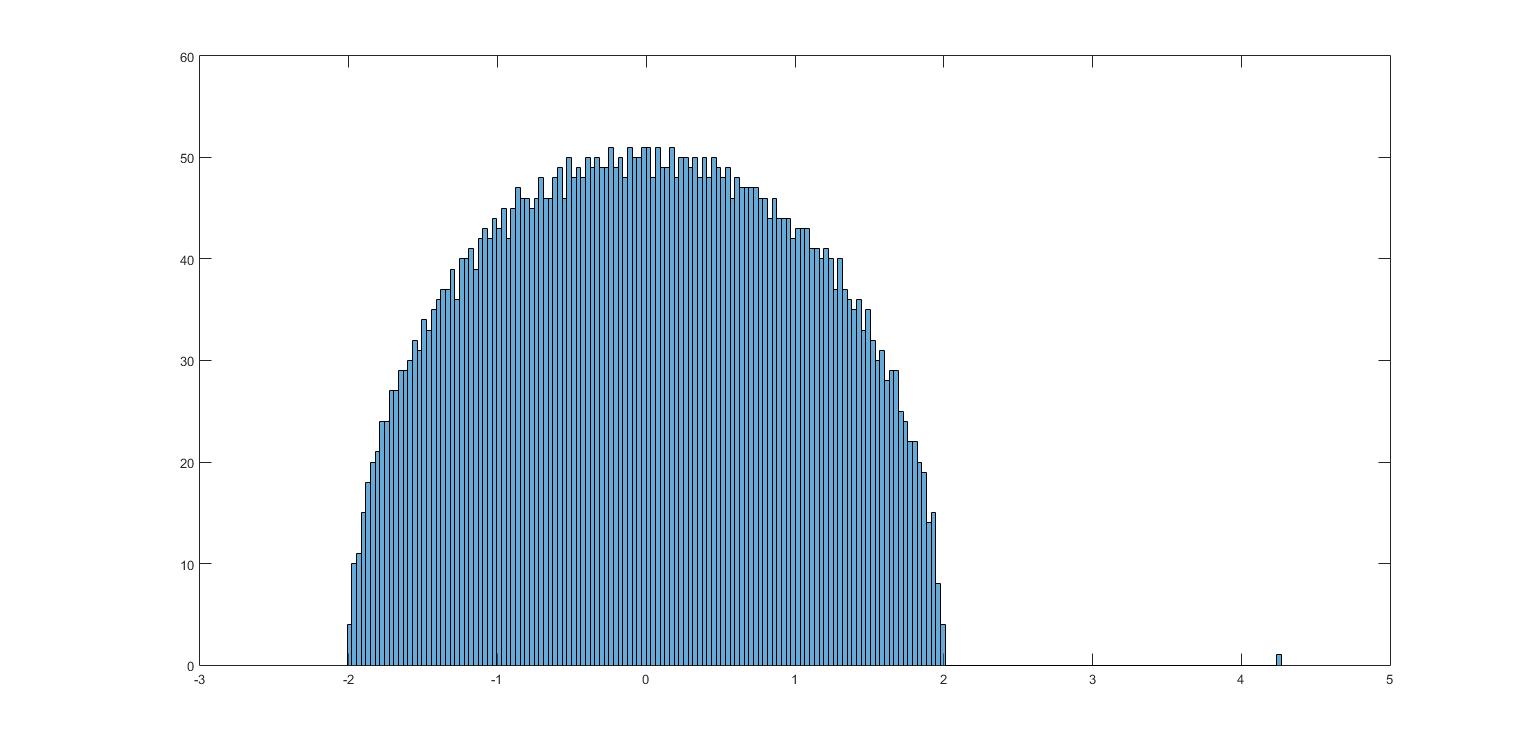}
    \caption{a  histogram of 5000 by 5000 Gaussian self adjoint (real) random matrix (GOE) with non zero mean. Each entry has mean 4/N and standard deviation $\frac{1}{\sqrt{N}}$, N=5000. Note that there is one exceptional large eigenvalue, while the rest follow a semi-circular distribution.}
     \label{fig0}
\end{figure}

In recent years, attempts have been made to understand the behaviour of large random, not necessarily be self adjoint, matrices of different types. In particular, Dykema and Haagerup in \cite{dykema} looked at the distribution limits of the upper triangular random matrices with iid complex standard Gaussian entries in the strictly upper triangular part and iid random variables distributed according to a compactly supported measure $\mu$ on the main diagonal. They showed that their upper triangular matrices converge in distribution to an element in a tracial Von Neumann algebra. The limiting element is called a DT-element or a DT-operator. These operators form an important class of non normal operators for which the Brown measure is explicitly known.  The DT-operators include Voiculescu's circular operator and the circular free Poisson operators. Star moments of of these operators show interesting combinatorial properties (see work by Sniady \cite{sniady}).  Dykema and Haagerup in \cite{dykemainv} proved that every DT operator has a nontrivial, closed, hyperinvariant subspace. Furthermore, every DT-operator generates the Von Neumann algebra $L(\mathbb{F}_2)$ of the free group on two generators.


The moment method is a very useful tool in random matrix theory and free probability (see \cite{mingo2017free},\cite{bose2008another}). The method refers to a variety of ways through which we can understand a measure $\mu$ through the integral of the form (called moments of the measure $\mu$), $$\int x^n d \mu(x).$$
As a classical example, the moments of the GOE random matrices converges to the moments of the semi circular law asymptotically, where the moments of an $N$ by $N$ random matrix $X_N$ is given by $E[tr(X_N^n)]$. In general, given a random matrix $X_N$ of size $N$, we define a unital state $\phi_N$ on $\mathbb{C}\langle x,x^* \rangle$ called the moment map given by  $\phi_N(p(x,x^*)) = E[tr(p(X_N,X_N^*))]$ for every non commutative polynomial $p \in \mathbb{C}\langle x,x^* \rangle$. Given a non commutative probability space $(\mathcal{A},\phi)$ and $X \in \mathcal{A}$, we say that the sequence of random matrices $X_N $ goes to $X$ in distribution asymptotically if $\phi_N(p(x,x^*)) \to \phi(p(X,X^*)) $ as $N \to \infty$.\\

\begin{figure}
  \centering
  \includegraphics[width=7cm, height=5.5cm]{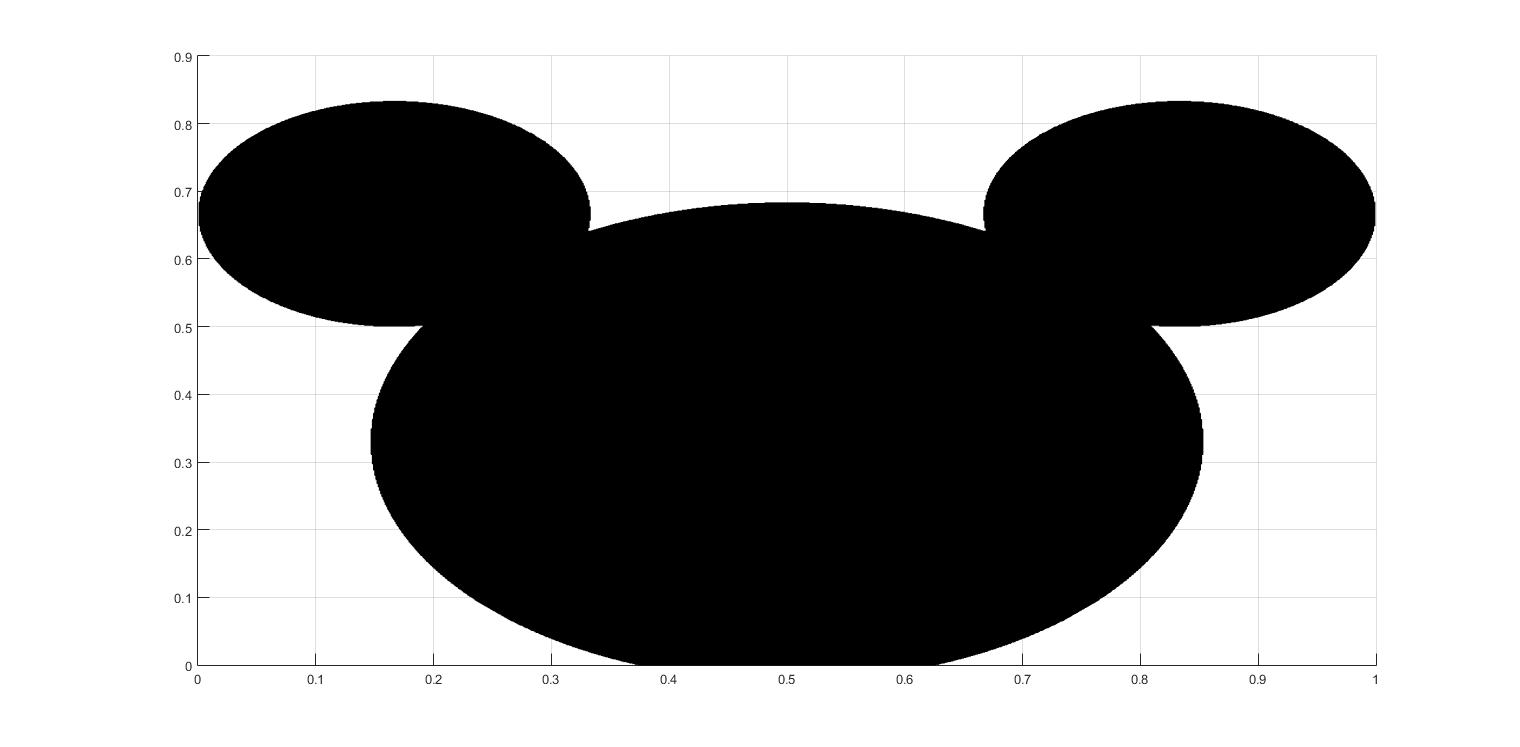}
  \caption{Union of three discs pattern in $[0,1]^2$}
  \label{3disc}
\end{figure}

Our main theorem states that a class of random matrices called the patterned matrices have a limit in distribution. To state the main theorem, we give some definitions. Let $S$ be a \textbf{pattern} i.e. $S \subset [0,1] \times [0,1]$ with boundary of $S$ having Lebesgue measure on $[0,1]^2$ equal to zero (for example, Figure \ref{3disc}).  We denote by $B_N(i_1,i_2,...,i_m) \in [0,1]^m$ the $m$ dimensional cube given by the set $(i_1/N,(i_1+1)/N) \times ... \times (i_m/N,(i_m + 1)/N)$, where $i_k \in \{0,...,N-1\}$ and $ k \in \{1,...,m\}$. Let $X_{N,S}$ be the \textbf{approximating matrix} ($N$ by $N$) of the given pattern $S$ (Figure \ref{fig2}). We define approximating matrix (also called the \textbf{patterned random matrix}) of a pattern by letting $(X_{N,S})_{ij}= X_{ij}/\sqrt{N}$, if $B_N(j-1,N-i) \cap S$ has non empty interior. Otherwise make it zero. Here $\{X_{ij}\}$ is a collection of iid random variables given beforehand, with the assumption that they have finite moments of all orders with \textbf{mean 0} and \textbf{variance 1}. Mean zero is essential in our analysis of random matrices, while variance is restricted to 1 for simplification.\\

Let $C$ be a \textbf{configuration}, which is a finite sequence in $x$ and $x^*$. The $k^{th}$ element of this sequence is denoted $C(k)$ and length of the sequence is called the length of the configuration. For example, $C=xx^*x^*xxx^*$ is of length 6 and $C(3)=x^*$. For a matrix $X$, we denote by $X^C$ the product of matrix with itself and its complex conjugate given by the configuration $C$. For example, if $C=x^*xxx^*$, then $X^C= X^*XXX^*$, where $X^*$ represent adjoint of matrix $X$. Given a configuration $C$ and a pair of random variables, $X_{i,j}$ and $X_{j,i}$, we denote by $X_{i,j}^{C(k)}$ the random variable $X_{j,i}$ if $C(k)=x^*$. If $C(k)=x$, $X_{i,j}^{C(k)}$ denotes the random variable $X_{i,j}$. Notation $X_{i,j}^*$ is also used to denote the random variable $X_{j,i}$. \\

\begin{figure}
    \centering
\includegraphics[width=12cm, height=4cm]{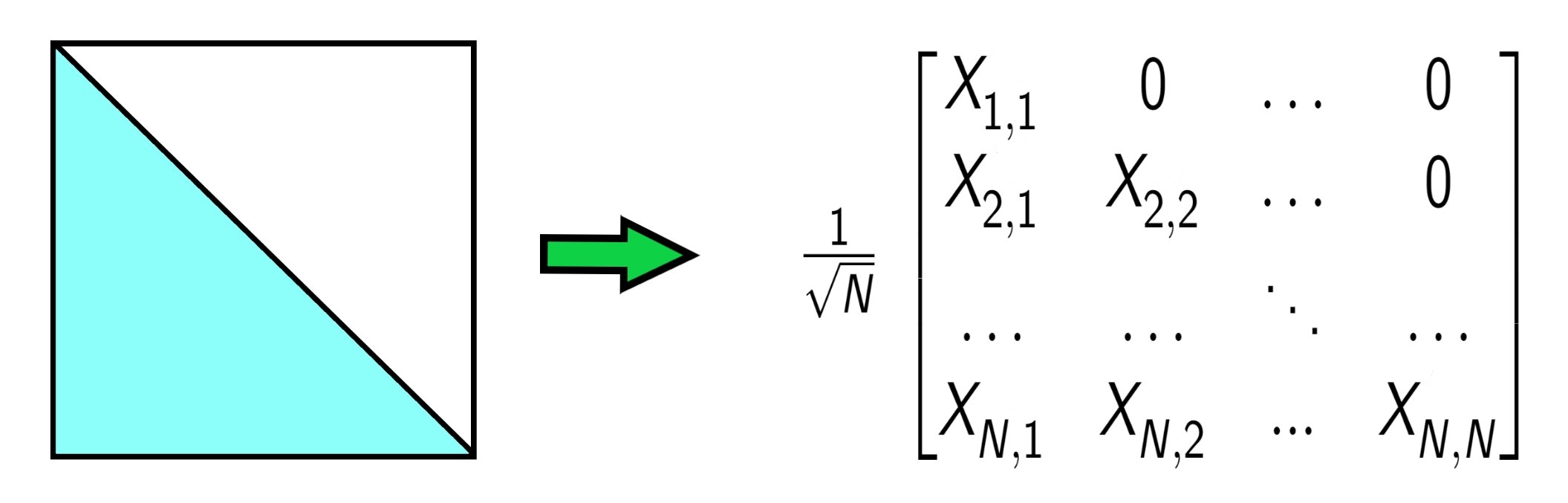}
    \caption{The figure above gives a solid triangular pattern. The approximating matrix for this shape is a lower triangular matrix. }
    \label{fig2}
\end{figure}

We will show that for a pattern $S$, the moments of the approximating matrix under the configuration $C$ converges as the size goes to infinity. The limiting moments are given by the scaled integral of a (combinatorial) function $f_{S,C}^{n+1}:[0,1]^{n+1} \to \mathbb{Z}_+$ called the path counting function, which will be defined later. We state the main theorem below and will prove the same as Theorem \ref{mainthm}.\\

\begin{thm}[Main Theorem] \label{MT}
    Given a configuration C of length $2n$ and a pattern S, asymptotic starred moment for the approximating matrix of pattern S with configuration C exists and is equal to $ lim_{N \to \infty} E[tr(X_{N,S}^C)]$ $= \frac{1}{(n+1)!}\int_{[0,1]^{n+1}}f_{S,C}^{n+1}(x_1,x_2,...,x_{n+1})d\mathbf{x}$.
\end{thm}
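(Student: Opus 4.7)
The plan is to prove this by the moment method. First, I would expand the normalized trace as
\[
E[tr(X_{N,S}^C)] \;=\; \frac{1}{N^{n+1}} \sum_{i_1,\ldots,i_{2n}=1}^{N} \left(\prod_{k=1}^{2n} \mathbf{1}_{S,N}(i_k,i_{k+1})\right)  E\!\left[\prod_{k=1}^{2n} X_{i_k,i_{k+1}}^{C(k)}\right],
\]
with the cyclic convention $i_{2n+1}=i_1$ and $\mathbf{1}_{S,N}(i,j)$ the indicator that $B_N(j-1,N-i) \cap S$ has nonempty interior. The normalization $N^{-(n+1)}$ is the product of the $N^{-n}$ supplied by the $2n$ factors of $1/\sqrt{N}$ and the $N^{-1}$ coming from $tr = \tfrac{1}{N}Tr$.

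Because the $X_{i,j}$ are independent with mean zero, the inner expectation vanishes unless, for every $k$, the factor $X_{i_k,i_{k+1}}^{C(k)}$ coincides with at least one other factor in the product. I would classify the resulting constraints by the partition $\pi$ of $\{1,\ldots,2n\}$ that records equal variables; a standard counting argument — essentially the Wigner argument, adapted to the fact that each $C(k)$ may either preserve or transpose the pair $(i_k,i_{k+1})$ — then shows that the number of free indices is at most $n+1$, with equality attained only on a distinguished family of \emph{admissible} pairings. Any partition containing a block of size $\geq 3$, and any pairing with fewer than $n+1$ free indices, contributes $O(N^{-1})$ and vanishes in the limit, while on each surviving pairing the inner expectation equals $1$ by the variance hypothesis.

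For each admissible pairing $\pi$ I would rescale the $n+1$ surviving free indices by $1/N$. The remaining sum becomes a Riemann sum, on the grid of mesh $1/N$ in $[0,1]^{n+1}$, for a bounded integrand given by a product of pattern indicators $\mathbf{1}_S$ applied to $2n$ pairs of free coordinates determined by $\pi$ and $C$. Since $\partial S$ has Lebesgue measure zero, the integrand is almost everywhere continuous, so the Riemann sum converges to its Lebesgue integral on $[0,1]^{n+1}$. Aggregating over the admissible pairings assembles exactly the path counting function $f_{S,C}^{n+1}$, and the $1/(n+1)!$ factor in the statement reflects the $(n+1)!$-fold symmetry under permuting the coordinates of $[0,1]^{n+1}$, which is built into the definition of $f_{S,C}^{n+1}$.

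The main obstacle is the combinatorial bookkeeping of paragraph two: verifying that the ceiling of $n+1$ free indices is attained only by the admissible planar pairings once the orientations prescribed by $C$ are properly tracked, and simultaneously bounding all other contributions by $O(N^{-1})$ uniformly in the number of partitions. A subsidiary technical point is the passage from the discrete condition "$B_N(j-1,N-i)\cap S$ has nonempty interior" to the pointwise indicator $\mathbf{1}_S$ in the Riemann-to-Lebesgue limit; this is precisely what the measure-zero-boundary hypothesis on $S$ is designed to control, and it should be handled via a standard approximation of $S$ by open and closed sets whose boundary contribution is negligible.
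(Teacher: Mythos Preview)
Your plan is correct and matches the paper's approach: reduce the trace expansion to the polar constraint paths and then pass to an integral over $[0,1]^{n+1}$ with the $(n+1)!$ symmetry. The paper packages the limit via simple functions and dominated convergence rather than Riemann sums, and it resolves what you flag as the main obstacle through an inductive ``wedge-removal'' argument (Lemmas~\ref{lem1}--\ref{lem4}) rather than a direct non-crossing/planar count.
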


The proof relies on basic combinatorics and counting methods. We will see that only the second moment of the underlying random variables contribute to the asymptotic moment calculation for the random matrices, which has been a common theme recently in the area \cite{tao}. Another observation is that the moments do not depend on whether the underlying random variables are real or complex which is a consequence of something what we refer to as polar pairing of paths (see Remark \ref{rem3}). We will start with the definitions in Section \ref{sec2}. We will state and prove the Main Theorem (Theorem \ref{mainthm}) in Section \ref{sec3}. Section \ref{freeness} generalizes the results from section \ref{sec3} to several independent patterned matrices. In the same section, we will show that independent patterned matrices are free from independent fully filled square matrices. We will conclude with Section \ref{sec5}, giving further results and mentioning some open problems. That section includes discussion on the relation of the pattern with its moments, the moments of triangular and fully filled square matrices, affect of base change on the moments and the relation of distribution limits of patterned matrices to finite Von Neumann algebras. We leave open the problem of generalizing our method to patterns with non zero measure boundary. \par
 Von Neumann algebras have been central in the understanding of free probability (see \cite{dykemainv}, \cite{guionnet2010random}), and our results suggest that the distribution limits of patterned matrices live in finite Von Neumann algebra. In the same spirit as  DT operators introduced by Dykema and Haagerup (see \cite{dykema}), the limiting non normal operators for the patterned matrices can help in understanding the spectral properties of operators and their Brown measure.

\section{Preliminaries and Definitions} \label{sec2}

For $n \in \mathbb{N}$, let $[n]$ denote the set $\{1,...,n\}$. Consider an $m+1$ by $N$ rectangular grid, $[m+1] \times [N] \subset \mathbb{Z} \times \mathbb{Z}$ (Figure \ref{fig3}). Each point in the set $[N]$ is referred to as a \textbf{color} and every point in the set $[m+1]$ is referred to as an \textbf{index}. A \textbf{path} on such a grid is a function $i: [m+1] \to [N]$ such that $i(1)=i(m+1)$ (see Figure \ref{fig3}). We sometimes use notation $i_l$ to represent $i(l)$. For a path $i:[m+1] \to [N]$, we define its length to be equal to $m$.

\begin{remark}
Note that our definition of a path is different from the literature as we restrict our paths to have the same end points.
\end{remark}



The \textbf{starred moment} under the configuration $C$ of an $N$ by $N$ random matrix $X_{N,S}$ is given by $E[tr(X_{N,S}^C)] = \frac{1}{N^{\frac{m}{2}+1}}
E[\sum_{i_1,..,i_m=1}^NX_{i_1,i_2}^{C(1)}X_{i_2,i_3}^{C(2)}...X_{i_{m},i_{1}}^{C(m)}]$.  To each \textbf{product term} of the form $X_{i_1,i_2}^{C(1)}X_{i_2,i_3}^{C(2)}...X_{i_{m},i_{1}}^{C(m)}$, we associate a unique path $i:[m+1] \to [N]$  given by $i(k)=i_k$ for all $k \in [m]$ and setting $i(m+1)=i(1)$. We call this path $i$ to be the \textbf{path derived from the product term} $X_{i_1,i_2}^{C(1)}X_{i_2,i_3}^{C(2)}...X_{i_{m},i_{1}}^{C(m)}$ in the
trace expansion of $X_{N,S}^C$. Conversely, given a pattern $S$ and configuration $C$ and a path $i: [m+1] \to [N]$ such that $i(1)=i(m+1)$, we can uniquely reconstruct the product term $X_{i_1,i_2}^{C(1)}X_{i_2,i_3}^{C(2)}...X_{i_{m},i_{1}}^{C(m)}$ by setting $i_k=i(k)$ for all $k \in [m]$. This gives a unique correspondence between paths and the product terms in trace expansion of the matrix $X_{N,S}^C$. 
\\

\begin{figure}[!] 
    \centering
\includegraphics[width=12cm, height=5.5cm]{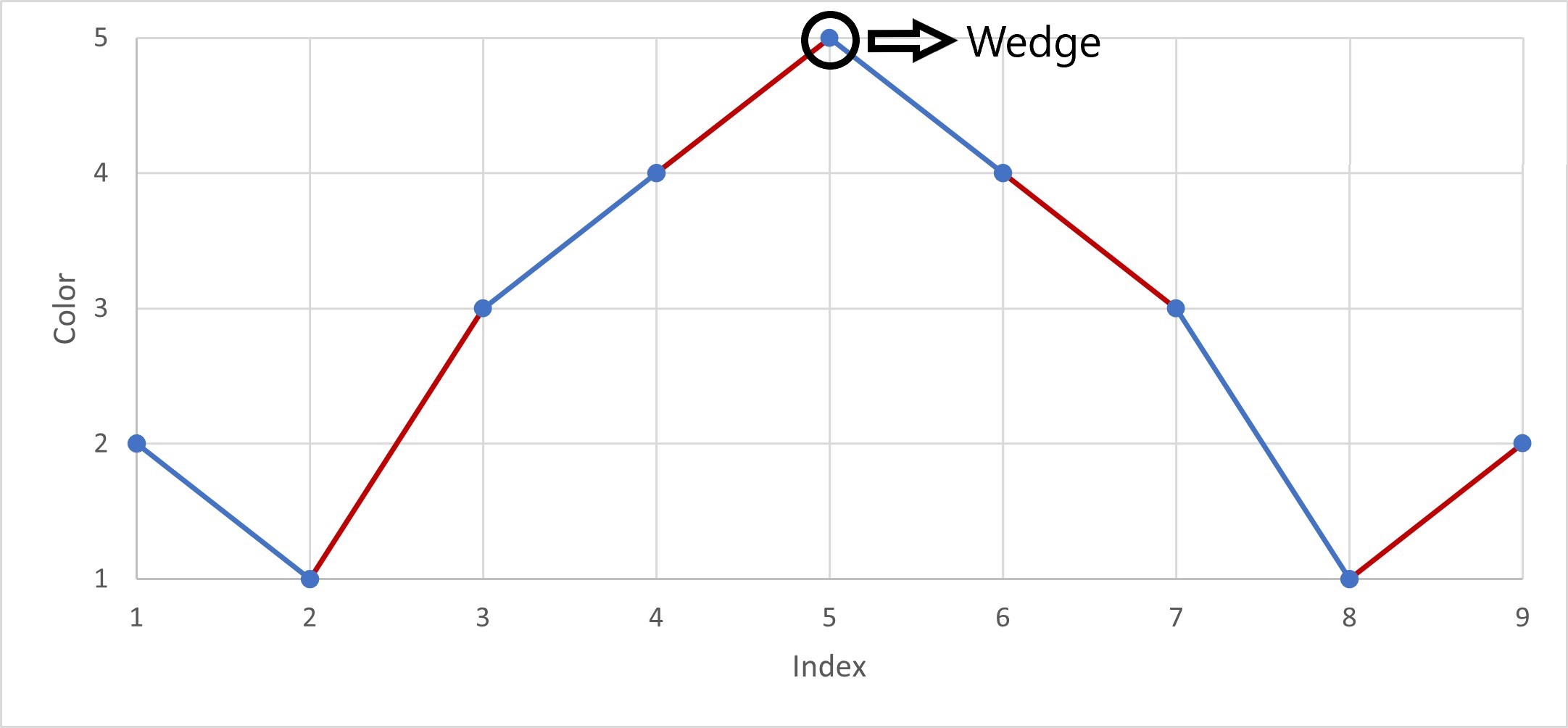}
    \caption{A polar constraint path $i:[8+1] \to [5]$ corresponding to the product term $X_{21}X^*_{13}X_{34}X^*_{45}X_{54}X^*_{43}X_{31}X^*_{12}$ in the trace expansion of $X_{N,S}^C$ where the configuration $C=xx^*xx^* xx^*xx^*$, pattern $S=[0,1]^2$ and N=5. We have marked the wedge.}
    \label{fig3}
\end{figure}

Here we define some specific type of paths. Let $C$ be a configuration of length $m$ and $S$ be a pattern. We define the $k^{th}$ \textbf{edge} of a path $i:[m+1] \to [N]$ to be the set $\{i(k),i(k+1)\}$ for $k\in [m]$ . Note that the $k^{th}$ edge corresponds to $k^{th}$ random variable in the product term derived from the given path in trace expansion of $X_{N,S}^C$. We call an edge $\{i(k),i(k+1)\}$ starred if $C(k)=x^*$ and unstarred otherwise. A path $i:[m+1] \to [N]$, is said to be \textbf{social} if for all $k \in [m]$, there exists $l \neq k$ such that edge $\{i(k),i(k+1) \}$ equals the edge $\{i(l),i(l+1) \}$. We call a social path a \textbf{pairing} if each edge is paired in the above sense an even number of times. We have \textbf{strict pairing} if each edge is paired with exactly one other edge. A \textbf{Polar paired} path is a paired path with the constraint that it has an equal number of starred and unstarred edges. If $m=2n$, we call a path a \textbf{(polar) constraint path} (see Figure \ref{fig3}) under a configuration $C$ and pattern $S$ if the path,
\begin{enumerate}
    \item follows the pattern and configuration of the matrix $X_{N,S}$, i.e. it is a path derived from a product term in the trace expansion of $X_{N,S}^C$ with none of the random variables identically 0 in the product term from which the path is derived.
    \item passes through $n+1$ colors (i.e. cardinality of the range of the path is $n+1$).
    \item is strictly (polar) paired
\end{enumerate}
 Note that social paths are the only paths contributing to the asymptotic moments. Non social paths always have an unpaired random variable and hence have mean zero due to independence, so they do not contribute in moment calculations. We will show in the next Section that the product terms in the trace expansion indexed by constraint paths are the only ones contributing to the asymptotic moment calculation. The number of all other social paths goes to 0 in proportion to constraint paths asymptotically.\\

The key tool for proving theorems here is the existence of a wedge in a constraint path. We define a \textbf{wedge} (see Figure \ref{fig3}) as the upward or downward V shape part of path. By V shape, we mean two consecutive edges $\{i(k),i(k+1)\}$ and $\{i(k+1),i(k+2)\}$ having $i(k)=i(k+2)$. A \textbf{strict wedge} is defined to be a wedge whose edges do not appear anywhere else in the path other than the wedge itself.


\section{Asymptotic moment of Patterned Random matrices} \label{sec3}

 Asymptotic moments of a random matrix under a pattern S and configuration C refers to the limit $\lim_{N \to \infty} E[tr(X_{N,S}^C)]$. When we say a path $i:[m+1] \to [N]$ passes through $k$ colors, we mean that the cardinality of its range is $k$. A point in the domain of a path is sometimes called an index of the path. Let $\lceil x \rceil$ be the least integer greater than or equal to $x$. We begin with certain properties of the paths. 

\begin{lemma} \label{lem1}
    The number of paths $i:[m+1] \to [N]$  normalized by ${1}/{(N^{\frac{m}{2} + 1})}$  passing through at most $\lceil m/2 \rceil$ colors, asymptotically goes to zero.
\end{lemma}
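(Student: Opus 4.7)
The plan is to give a direct counting bound. A path $i:[m+1]\to[N]$ is determined by the values $i(1),\ldots,i(m)$ (since $i(m+1)=i(1)$ is forced), so there are $N^m$ paths in total, and the issue is to show that only a small fraction of them pass through at most $\lceil m/2\rceil$ colors.

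First I would upper bound the number of paths whose range has cardinality at most $k$. Such a path is obtained by (i) choosing the color set $T\subseteq[N]$ of size $k$, for which there are $\binom{N}{k}$ choices, and (ii) assigning a color in $T$ to each of the $m$ free indices $i(1),\ldots,i(m)$, which contributes at most $k^m$ choices. This gives the crude but sufficient bound
$$\#\{\text{paths through at most }k\text{ colors}\}\;\le\;\binom{N}{k}k^m\;\le\;N^{k}\,k^m.$$

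Next I would apply this with $k=\lceil m/2\rceil$ and divide by $N^{m/2+1}$. When $m$ is even, $k=m/2$, and the ratio becomes $(m/2)^m/N$, which tends to $0$. When $m$ is odd, $k=(m+1)/2$, and the ratio becomes $((m+1)/2)^m/N^{1/2}$, which again tends to $0$. Since $m$ is a fixed positive integer, the $m$-dependent factor is a constant and plays no role in the limit.

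There is essentially no obstacle here: the argument is a one-line counting estimate, and the whole point of the lemma is that the exponent $\lceil m/2\rceil$ is strictly smaller than $m/2+1$, leaving at least a $1/\sqrt{N}$ gap to absorb the constants. The lemma will later be used to justify that only paths through at least $\lceil m/2\rceil+1$ colors, and in particular the constraint paths passing through exactly $n+1$ colors when $m=2n$, contribute to the asymptotic starred moments.
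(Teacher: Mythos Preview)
Your proof is correct and follows essentially the same approach as the paper: both argue that the number of paths through at most $\lceil m/2\rceil$ colors is $O(N^{\lceil m/2\rceil})$, and since $\lceil m/2\rceil < m/2+1$, normalizing by $N^{m/2+1}$ drives the count to zero. You simply make the $O(N^{\lceil m/2\rceil})$ bound explicit via $\binom{N}{k}k^m\le N^k k^m$, whereas the paper states it directly.
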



\begin{proof}

Passing through at most $\lceil m/2 \rceil$ colors is the same as saying that cardinality of the range of the function $i$ is at most $\lceil m/2 \rceil$. Total number of such functions $i:[m+1] \to [N]$ are of the order $O(N^{\lceil m/2 \rceil})$. Hence normalizing by $1/N^{\frac{m}{2} + 1}$ gives us the lemma.
\end{proof}

\begin{lemma} \label{lem2}
   The number of social paths $i:[(2n+1)+1] \to [N]$ normalized by ${1}/{(N^{\frac{(2n+1)}{2} + 1})}$ asymptotically goes to 0. This gives that given an odd length configuration C of length $2n+1$, $\lim_{N \to \infty} E[tr(X_{N,S}^C)] = 0$.
\end{lemma}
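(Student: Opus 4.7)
The plan is to reduce the odd-length case to Lemma \ref{lem1} by showing that every social path of odd length necessarily passes through few colors, and that non-social paths contribute nothing to the trace expansion.

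First I would expand the trace:
\begin{equation*}
E[tr(X_{N,S}^C)] \;=\; \frac{1}{N^{\frac{2n+1}{2}+1}} \sum_{i_1,\ldots,i_{2n+1}} E\bigl[X_{i_1,i_2}^{C(1)} X_{i_2,i_3}^{C(2)} \cdots X_{i_{2n+1},i_1}^{C(2n+1)}\bigr].
\end{equation*}
Using that the $X_{ij}$ are independent with mean zero, any term whose associated path contains an edge $\{i(k),i(k+1)\}$ that is not repeated elsewhere factors and vanishes in expectation. So the only contributions come from social paths. Since the $X_{ij}$ have finite moments of all orders, each surviving summand is bounded in absolute value by a constant $K = K(n)$ depending only on the configuration length (apply the generalized Hölder inequality to the $2n+1$ factors). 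Thus
\begin{equation*}
\bigl|E[tr(X_{N,S}^C)]\bigr| \;\leq\; \frac{K \cdot \#\{\text{social paths } i:[2n+2]\to[N]\}}{N^{\frac{2n+1}{2}+1}},
\end{equation*}
and it suffices to prove that this ratio tends to zero.

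The key combinatorial step is to bound the color count of a social path. In a social path of length $m=2n+1$, every edge must coincide with at least one other edge, so the number of \emph{distinct} edges traversed is at most $\lfloor m/2 \rfloor = n$. Since the path is a walk, the set of vertices visited is contained in the vertex set of the connected subgraph formed by these distinct edges; that subgraph has at most $n+1$ vertices. Hence every social path passes through at most $n+1$ colors. But $n+1 = \lceil (2n+1)/2 \rceil$, so by Lemma \ref{lem1} applied with $m = 2n+1$, the number of such paths divided by $N^{(2n+1)/2 + 1}$ tends to $0$.

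Combining the two ingredients gives $\lim_{N \to \infty} E[tr(X_{N,S}^C)] = 0$, which is the desired conclusion. The one step that needs a small bit of care is the transition from \emph{edges} to \emph{colors}: one should explicitly observe that the walk structure forces the visited colors to lie in a connected multigraph on at most $n$ edges, which has at most $n+1$ vertices; everything else is a direct invocation of independence, moment bounds, and the previous lemma.
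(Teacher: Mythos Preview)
Your argument is correct, and the overall structure (reduce to social paths, bound each surviving expectation uniformly, then invoke Lemma~\ref{lem1}) matches the paper. The difference lies in the combinatorial core: the paper proves that a social path of length $2n+1$ uses at most $n+1$ colors by \emph{induction via wedge removal} --- if the path has a strict wedge, excise it and apply the inductive hypothesis to the shorter social path; if not, argue directly that every color is hit at least twice and one at least thrice. You instead give a one-shot graph-theoretic count: each of the $2n+1$ edges is repeated, so there are at most $n$ distinct edges, and a connected walk on $\leq n$ edges visits $\leq n+1$ vertices (spanning-tree bound). Your route is shorter and avoids induction for this lemma; the paper's route is longer here but introduces the wedge-removal mechanism (cf.\ Remark~\ref{rem1}) that it reuses repeatedly in Lemmas~\ref{lem3}--\ref{lem4} and beyond, so there is a pedagogical payoff to their choice.
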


\begin{proof}

 we will show that social paths in an odd length configuration of length $2n+1$ pass through at most $n+1 $ colors, and apply lemma \ref{lem1} to conclude the result. We will do this by induction.

The base case is obvious. For a social path of length $2n+1$, we know that an edge appears at least thrice. We proceed by analysing two cases. For the first case, assume we do not have any strict wedge in the path. In this case, the inverse image of every point in the range of the path $i:[2n+2] \to [N]$ has cardinality at least two. Since one of the edges appears at least thrice, we have that the function $i$ takes the same value at three distinct points. Thus the range of the path $i$ has cardinality at most $n+1$; i.e. the path passes through at most $n+1$ colors.

Suppose that the path has a strict wedge. In this case, we remove the wedge and join the endpoints together to create a new path of length $2(n-1)+1$. More precisely, suppose $i:[2n+1] \to [N]$ is a path with wedge at point $k$ in the domain. We create a new path $i':[2n-1] \to [N]$ defined by $i'(j)=i(j)$ for all $1 \leq j \leq (k-1)$, and $i'(j)=i(j+2)$ for $k \leq j \leq 2n-1$. This new path satisfies the induction hypothesis, and hence passes through at most $n$ colors. The wedge adds at most one new color to the path $i$, and hence the maximum colors it passes through is $n+1$. This concludes the proof for claim about paths. \par

To see that $\lim_{N \to \infty} E[tr(X_{N,S}^C)] = 0$ for odd length configuration of length $2n+1$, we note that only the social paths contribute to asymptotic moment. We observe that the in the trace expansion, we get a normalization of ${1}/{N^{\frac{(2n+1)}{2} + 1}}$, and hence number of social paths asymptotically goes to 0. Also, mean of every term corresponding to a social path is bounded uniformly. Therefore, we get that $\lim_{N \to \infty} E[tr(X_{N,S}^C)] = 0$.

\end{proof}

\begin{remark} \label{rem1}
The method used in the proof of lemma \ref{lem2} is a fundamental method that we adopt whenever we try to analyse paths. This boils down to inferring property of the paths by dividing them in two cases: the case where we do have a wedge and the other, where we don't. The case without wedge is usually straight forward. In the case of a wedge, we remove the wedge to get a smaller
path of the same nature and conclude using induction.
\end{remark}

\begin{remark}\label{rem2}
Using the same technique as in proof of Lemma \ref{lem2}, we can infer that for a configuration of length $2n$, a social path can pass through at most $n+1$ colors. 
\end{remark}

From now on, all configurations are of even length.

\begin{lemma}[Existence of a wedge for a social path with n+1 colors]\label{lem3}
Let $i:[2n+1] \to [N]$ be a social path and passing through $n+1$ colors. Such a path has a strict wedge. 
\end{lemma}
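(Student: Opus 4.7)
The plan is to exploit the rigidity imposed by being social and using exactly $n+1$ colors. First, I would establish that the underlying undirected multigraph of the path is a spanning tree on the visited colors in which every edge is traversed exactly twice. The walk $i$ has $2n$ edges counted with multiplicity and visits $n+1$ distinct vertices. Since each distinct edge must appear at least twice (social), there are at most $n$ distinct edges. On the other hand, the visited vertex set is connected through the walk itself, so at least $n$ distinct edges are required. Equality in both bounds forces every edge to appear exactly twice and the distinct-edge graph to be a tree on $n+1$ vertices.

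Given this tree structure, I would produce the wedge from a leaf. For $n \geq 1$ any tree on $n+1$ vertices has at least two leaves, so I can select a leaf $v$ with $v \neq i(1)$. The vertex $v$ lies in the range of $i$, so $i(k) = v$ for some $k$, and the inequality $v \neq i(1) = i(2n+1)$ forces $1 < k < 2n+1$. Both $i(k-1)$ and $i(k+1)$ are tree-neighbors of $v$, but $v$ has a unique tree-neighbor $u$, hence $i(k-1) = i(k+1) = u$. This is exactly a wedge centered at index $k$.

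The wedge is strict because its two edges coincide and both equal $\{u,v\}$, while by the counting step $\{u,v\}$ appears in the path exactly twice; both occurrences are absorbed by the wedge, so the edge appears nowhere else in the path.

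The main hurdle is the counting/connectedness argument that pins down the tree structure; once this is available, the rest is essentially mechanical. A minor bookkeeping point is making sure the leaf is chosen distinct from $i(1) = i(2n+1)$: if one were to pick a leaf at the endpoint, the two occurrences of its adjacent edge could sit at positions $1$ and $2n$, producing only a cyclic V and not a wedge in the author's non-cyclic sense. Having a second leaf, guaranteed by $n \geq 1$, circumvents this issue.
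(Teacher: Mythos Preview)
Your argument is correct, but it takes a more structural route than the paper. The paper argues by contraposition and pigeonhole: if there is no strict wedge, then any color appearing at a single interior index would, by sociality, force the two incident edges to coincide and hence create a strict wedge; so every color occupies at least two of the $2n+1$ indices, whence the range has at most $n$ colors, contradicting the hypothesis. That is essentially the whole proof.

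Your approach instead first pins down the global combinatorics---the distinct-edge graph is a tree on $n+1$ vertices with every edge traversed exactly twice---and then reads off the strict wedge from a leaf distinct from $i(1)$. This is longer, but it yields more: the tree/double-cover picture you establish is exactly the content of the paper's Remark~\ref{rem3.0} (strict pairing), which the paper derives separately by induction after Lemma~\ref{lem3}. So your proof effectively merges Lemma~\ref{lem3} and Remark~\ref{rem3.0} into one argument. Your care in choosing a leaf other than $i(1)=i(2n+1)$ is also on point; the paper's pigeonhole avoids this issue because it never needs to locate the wedge explicitly.
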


\begin{proof}
This is true because if there is no strict wedge, each color has to appear at at least two indices as each edge is at least paired. And there are $2n+1$ indices. We observe that one of the colors appear on at least three indices, as there are odd number of indices $2n+1$. Thus, the path passes through at most $n$ colors if there is no strict wedge.
\end{proof}

\begin{remark} \label{rem3.0}
In order for a path of length $2n$ to pass through $n+1$ colors, the path has to be strict pairing. The reason being, by Lemma \ref{lem3}, there has to be a strict wedge in the path. Remove the strict wedge and join its endpoints to get a social path of length $2(n-1)$ with $n$ colors. By an induction argument, this new path is strictly paired and hence the original path is strictly paired.\\

\end{remark}

We will prove an important lemma below which characterizes the paths which contribute to asymptotic moments. 

\begin{lemma} \label{lem4}
    Amongst all paths in the trace expansion of approximating matrix (of pattern $S$) of size $N$ with configuration $C$ (of length 2n), only the polar constraint paths contribute to asymptotic moment as $N \to \infty$.
\end{lemma}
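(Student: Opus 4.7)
The plan is to peel off, from the trace expansion
\[
E[tr(X_{N,S}^C)] = \frac{1}{N^{n+1}} \sum_{i_1, \ldots, i_{2n} = 1}^{N} E\bigl[ X_{i_1, i_2}^{C(1)} X_{i_2, i_3}^{C(2)} \cdots X_{i_{2n}, i_1}^{C(2n)} \bigr],
\]
the classes of paths whose contribution vanishes in the limit, and to show that what remains is exactly the sum over polar constraint paths. Throughout, each summand $E\bigl[\prod_k X_{i_k, i_{k+1}}^{C(k)}\bigr]$ is uniformly bounded by a constant depending only on $C$ and the moments of the iid family, which are finite by assumption.

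First I would dispose of three easy classes. Non-social paths contribute zero because their associated product has a factor independent of the rest with mean zero. Paths whose product contains a factor identically zero (an edge outside the pattern $S$) also contribute zero. Finally, Remark \ref{rem2} says every social path of length $2n$ visits at most $n+1$ colors, and Lemma \ref{lem1} gives at most $O(N^n)$ paths that visit at most $n$ colors; since each surviving summand is uniformly bounded, these contribute at most $O(1/N)$ to the normalized trace and vanish asymptotically.

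The remaining candidates are social paths of length $2n$ visiting exactly $n+1$ colors whose product follows the pattern, and by Remark \ref{rem3.0} every such path is strictly paired. The heart of the proof is to show that any such path is automatically polar paired. I would use a tree argument: regard the $n+1$ visited colors as vertices and the $n$ distinct pair-classes of edges as edges of a graph $G$, so that the path is a closed walk on $G$ traversing each edge exactly twice. Since the walk visits all $n+1$ vertices, $G$ is connected; with $n+1$ vertices and $n$ edges it must be a tree. In a tree every edge is a bridge: removing an edge $e$ splits the vertices into two components, and a closed walk that begins and ends at the same vertex must cross $e$ an even number of times, so the two crossings occur one in each direction. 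Now unwinding the definition of $X_{i,j}^{C(k)}$, two paired edges at positions $k$ and $l$ are traversed in the same direction when $C(k) = C(l)$ and in opposite directions when $C(k) \neq C(l)$; the tree argument forces the latter, so each pair consists of one starred and one unstarred edge and the path is polar.

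Combining the four reductions, only polar constraint paths contribute to $\lim_{N \to \infty} E[tr(X_{N,S}^C)]$. The main obstacle is the tree step together with the careful unpacking of the direction-versus-star correspondence for paired edges; once that is in place, the remaining arguments are routine applications of the earlier lemmas and a standard $O(N^n)$ counting bound.
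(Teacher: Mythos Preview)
Your reduction to strictly paired paths on $n+1$ colors is correct, and the tree argument is a clean alternative to the paper's proof, which instead proceeds by induction on $n$: it locates a strict wedge (Lemma \ref{lem3}), removes it, applies the induction hypothesis to the shorter path, and then observes that the excised wedge must itself be polar for the product to have nonzero mean. Your global tree picture avoids this recursion.

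There is, however, a genuine gap in your last step. The assertion ``two paired edges at positions $k$ and $l$ are traversed in the same direction when $C(k) = C(l)$ and in opposite directions when $C(k) \neq C(l)$'' is false: the direction of traversal is determined by the path $i$ alone and is entirely independent of $C$. For example, with $C = xx$ and the path $i = (1,2,1)$, the edge $\{1,2\}$ is traversed in opposite directions even though $C(1) = C(2)$. This path is strictly paired on $n+1 = 2$ colors and follows the pattern, yet it is \emph{not} polar. So your claim that every remaining candidate is automatically polar paired is simply wrong.

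What you are missing is one further elimination: among strictly paired paths on $n+1$ colors, discard those whose product has mean zero. For such a path the expectation factors over the $n$ pairs, since distinct tree edges index independent random variables. Your bridge argument shows the pair at positions $k,l$ is traversed in opposite directions, say $i_k = a = i_{l+1}$ and $i_{k+1} = b = i_l$; the pair then contributes $E\bigl[X_{a,b}^{C(k)} X_{b,a}^{C(l)}\bigr]$, which unwinds to $E[X_{a,b}X_{b,a}] = 0$ when $C(k) = C(l)$ (independent mean-zero factors) and to the variance $1$ when $C(k) \neq C(l)$. Thus nonzero expectation forces every pair to be polar. With this correction your argument goes through and gives a genuinely different, arguably more transparent, proof than the paper's induction.
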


\begin{proof}
From Lemma \ref{lem1}, we know that amongst all the paths in trace expansion of $X_{N,S}^C$, only social paths passing through $n+1$ colors contribute to asymptotic moment (see Remark \ref{rem2}). We will show, using induction on length of the configuration (which equals $2n$), that social paths passing through $n+1$ colors with non zero mean (i.e. $E[X_{i_1,i_2}^{C(1)}X_{i_2,i_3}^{C(2)}...X_{i_{m},i_{1}}^{C(m)}] \neq 0$ for the path $i(k)=i_k$) have to be polar constraint paths.\par
Base case (for $n=1$) is obvious as there are only two possible paths that can pass through 2 colors and they have to be polar constraint for non zero mean. For general $n>1$, we note that there has to be a strict wedge by Lemma \ref{lem3}. We remove this wedge and join the endpoints to get a new path of length $2n$. The new path is still social but with one less color, and hence the new path is a social path of length $2(n-1)$ passing through $n$ colors. If the reduced path is polar constraint, then by the induction hypothesis, this path will have non zero mean. Hence, adding the wedge back makes the original path of length $n$ to have non zero mean only if the wedge represents polar paired random variables. Therefore, the whole path has to be polar paired for non zero mean in this case. \par
In the case when the reduced path is not polar paired and therefore has zero mean, the original path will have zero mean because the removed wedge is a strict wedge.
\end{proof}

\begin{remark} \label{rem3}
Remark \ref{rem3.0} gives that in the asymptotic moment calculation, information about the second moment is enough (as constraint paths have strict pairing). Lemma \ref{lem4} gives that complex and real matrices both have the same moments (due to `polar' pairing).
\end{remark}

\begin{defn}

    Given a pattern $S \subset [0,1] \times [0,1]$, and a configuration C of length $2n$, we define a function in $n+1$ variables, called the \textbf{path counting function}, $f_{S,C}^{n+1} : [0,1]^{n+1} \to \mathbb{N}$ as follows. For distinct $x_1,...,x_{n+1} \in [0,1]$, $f_{S,C}^{n+1}(x_1,...,x_{n+1})$ counts the number of paths $i:[2n+1] \to [n+1]$, with the following constraints, 
    \begin{enumerate}
    \item path $i$ is strictly polar paired and passes through $n+1$ colors.
        \item  for $k \in \{1,...,2n\}$, the ordered pair $(x_{i(k)},x_{i(k+1)}) \in S$ if $C(k)=x$, and $(x_{i(k+1)},x_{i(k)}) \in S$ if $C(k)=x^*$.
    \end{enumerate} 
     We set $f_{S,C}^{n+1}(x_1,...,x_{n+1})=0$ if $x_i=x_j$ for $i \neq j$.\\ 
     
\end{defn}
	
\begin{thm}[Main Theorem] \label{mainthm}
    Given a configuration C of length $2n$ and a pattern S, asymptotic starred moment for the approximating matrix of pattern S with configuration C exists and is equal to $ lim_{N \to \infty} E[tr(X_{N,S}^C)]$ $= \frac{1}{(n+1)!}\int_{[0,1]^{n+1}}f_{S,C}^{n+1}(x_1,x_2,...,x_{n+1})d\mathbf{x}$.
\end{thm}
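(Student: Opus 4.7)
The plan is to combine Lemma~\ref{lem4} with a Riemann-sum argument that turns the count of polar constraint paths in $[N]$ into an integral of $f_{S,C}^{n+1}$. First I would write the trace expansion
\[
E[tr(X_{N,S}^C)] = \frac{1}{N^{n+1}} \sum_{i_1,\ldots,i_{2n}=1}^{N} E\bigl[X_{i_1,i_2}^{C(1)} \cdots X_{i_{2n},i_1}^{C(2n)}\bigr]
\]
and invoke Lemma~\ref{lem4} to discard every summand whose indexing path is not a polar constraint path; those contribute $o(1)$ after the $1/N^{n+1}$ normalization.

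Next I would check that each remaining polar constraint path contributes exactly $1$ to the expectation. The wedge-removal induction from the proof of Lemma~\ref{lem4} constructs such a path by gluing on strict wedges whose two edges necessarily run in opposite directions and, for nonzero mean, carry opposite star letters; so the two factors of each pair are the same underlying iid random variable (possibly conjugated), each pair contributes the variance $E[|X|^2]=1$, and the full product has expectation $1$.

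Having reduced the problem to counting polar constraint paths, let $N_{\mathrm{pcp}}(N)$ denote that number. I would factor each $i\colon[2n+1]\to[N]$ as $i = c \circ j$, where $c\colon[n+1]\hookrightarrow[N]$ is an injection listing the $n+1$ distinct colors the path visits and $j\colon[2n+1]\twoheadrightarrow[n+1]$ is the abstract labeled path. Each $i$ admits exactly $(n+1)!$ such factorizations, one per relabeling of the colors, so
\[
N_{\mathrm{pcp}}(N) = \frac{1}{(n+1)!} \sum_{c:[n+1]\hookrightarrow[N]} \#\bigl\{j : j \text{ is strictly polar paired, surjective, and the pattern condition holds along } c\circ j\bigr\}.
\]
For fixed $j$, the pattern condition on the $k$-th edge requires that $B_N(c(j(k+1))-1,\,N-c(j(k)))$ meet $S$ in nonempty interior; this box has diameter $O(1/N)$ and, up to the coordinate convention spelled out in the definition of $f_{S,C}^{n+1}$, collapses to the very point at which $S$ is tested there. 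Hence for large $N$ the inner count equals $f_{S,C}^{n+1}(c(1)/N,\ldots,c(n+1)/N)$ except on a boundary layer.

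Finally, sending $N\to\infty$: the hypothesis that $\partial S$ has Lebesgue measure zero forces the boundary-layer error to contribute $o(N^{n+1})$, and the diagonal $\{x_a=x_b\}\subset[0,1]^{n+1}$ is also Lebesgue-null, so the Riemann sum for the bounded, almost-everywhere continuous function $f_{S,C}^{n+1}$ converges and yields
\[
\frac{N_{\mathrm{pcp}}(N)}{N^{n+1}} \longrightarrow \frac{1}{(n+1)!}\int_{[0,1]^{n+1}} f_{S,C}^{n+1}(x_1,\ldots,x_{n+1})\,d\mathbf{x},
\]
which combined with the earlier step gives the theorem. The main technical obstacle is exactly this last step: uniformly (across the finitely many abstract configurations $j$) dominating the boundary-layer error where $B_N(\cdot)$ straddles $\partial S$, for which the measure-zero boundary hypothesis on $S$ is precisely what makes the indicator of $S$ Riemann integrable and the limit valid.
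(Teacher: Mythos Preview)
Your proposal is correct and follows essentially the same route as the paper's proof: reduce to polar constraint paths via Lemma~\ref{lem4}, factor each such path through an injection $c:[n+1]\hookrightarrow[N]$ (picking up the $(n+1)!$ overcount), and identify the resulting count as a discretization of $f_{S,C}^{n+1}$ whose convergence is guaranteed by the measure-zero boundary hypothesis on $S$. The paper phrases the last step as pointwise a.e.\ convergence of simple functions $s_N\to f_{S,C}^{n+1}$ followed by dominated convergence, whereas you phrase it as a Riemann-sum limit for a bounded a.e.\ continuous integrand; these are equivalent, and your version makes the role of the boundary hypothesis slightly more explicit. Your explicit verification that each polar constraint path contributes exactly $1$ (via the opposite-orientation wedge pairing and $E[|X|^2]=1$) is a point the paper leaves implicit.
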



\begin{figure}
\begin{subfigure}[b]{0.45\textwidth}
  \centering
  \includegraphics[width=0.9\linewidth, height=120 pt]{mickey_mouse_refined.jpg}
  \caption{Union of three discs pattern in $[0,1]^2$ with discs centered at (0.5,0.33), (1/4,1/6) and (5/6,4/6) of radius $\frac{1}{\sqrt{8}}$, $\frac{1}{6}$ and $\frac{1}{6}$}
  \label{fig:sfig1}
\end{subfigure}
\hfill
\begin{subfigure}[b]{.45\textwidth}
  \centering
  \includegraphics[width=1.7\linewidth, height=130 pt]{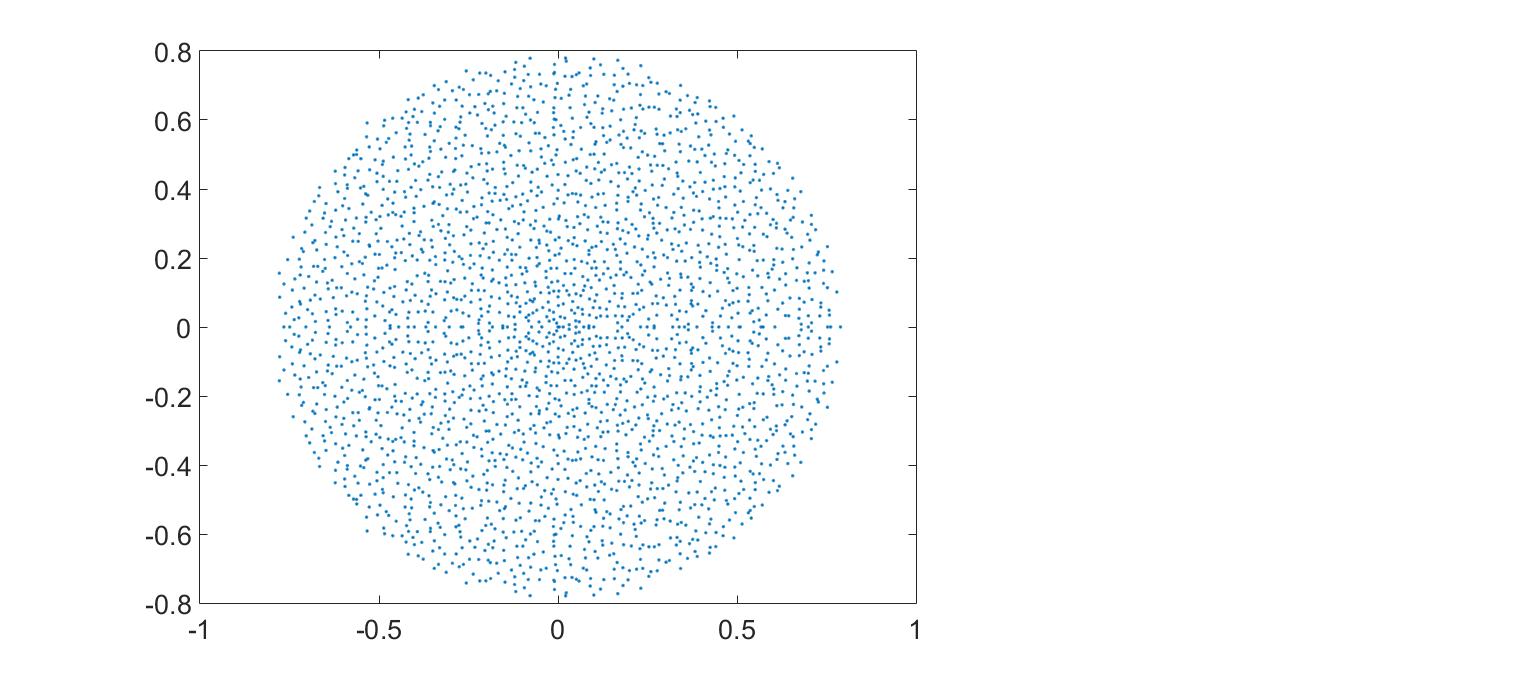}
  \caption{Spectrum for approximating matrix of union of three discs pattern of size 3000}
  \label{fig:sfig2}
\end{subfigure}
\begin{subfigure}{1\textwidth}
  \centering
  \includegraphics[width=1.1\linewidth]{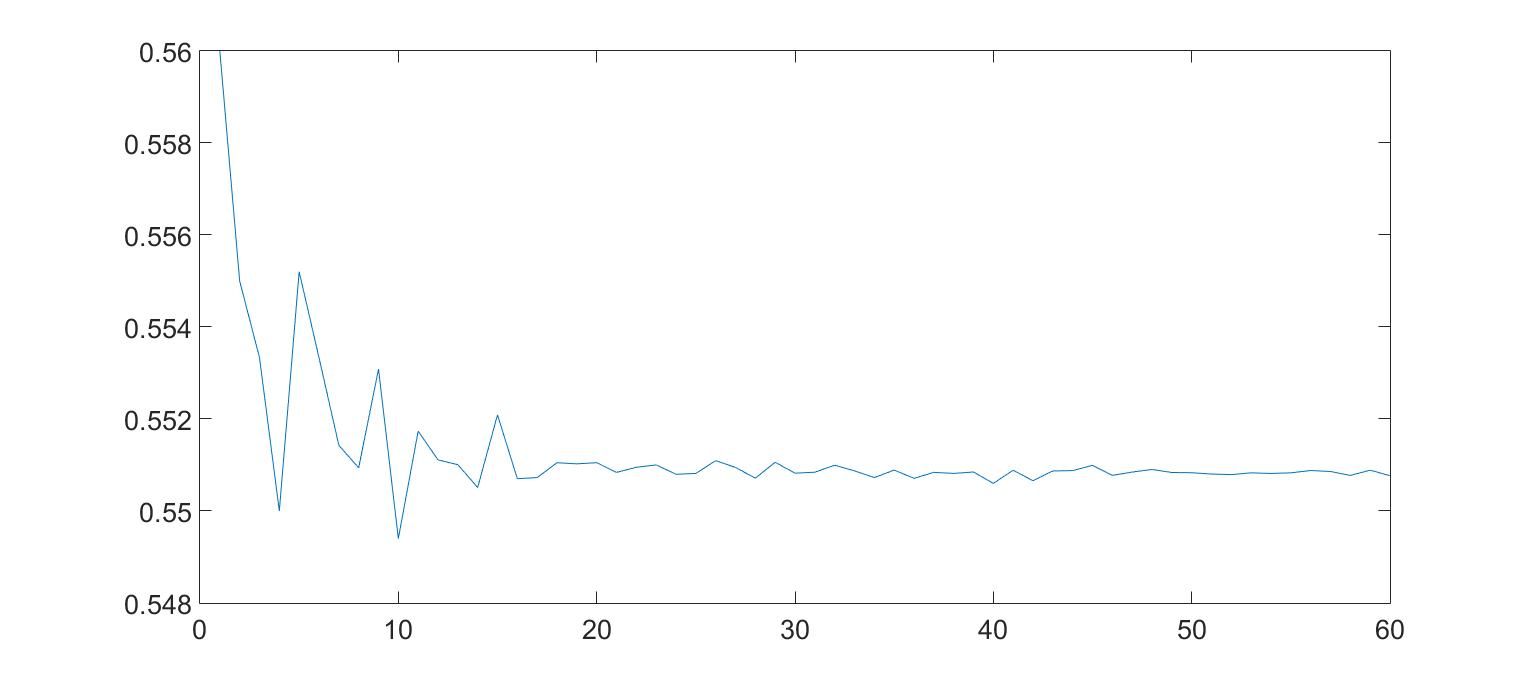}
  \caption{Graph of the moments of (a realization of) approximating matrices under configuration $xx^*$ as size increases. X axis represents $\frac{1}{10}$ the size of approximating matrix and Y axis gives the corresponding moment of the approximating matrix of given size. From the graph, moments can be seen converging to 0.5508}
  \label{fig:sfig3}
\end{subfigure}

\caption{Spectrum and moments of union of three discs pattern}
\label{fig:fig}
\end{figure}

\begin{proof}

 For each $N \in \mathbb{N}$, we construct simple function $s_N$ on $[0,1]^{n+1}$ by partitioning $[0,1]^{n+1}$ into $n+1$ dimensional disjoint cubes of edge length $1/N$, $B_N(i_1,...,i_{n+1})$ for all $i_k \in \{0,...,N-1 \}$. On $B_N(i_1,...,i_{n+1})$, define simple function's value to be equal to the number of polar constraint paths $l:[2n+1] \to [N]$ such that $l(k) \in \{i_1+1,...,i_{n+1}+1\}$ for all $k \in [2n+1]$ in the trace expansion of  $X_{N,S}^C$. By construction, we get that $s_N$ converges almost surely to $f_{S,C}^{n+1}$. The functions, $s_N$ and $f_{S,C}^{n+1}$  are dominated by the total number of distinct paths from $[2n+1] \to [n+1]$. Hence by dominated convergence Theorem, we get that $\int_{[0,1]^{n+1}} s_N \to \int_{[0,1]^{n+1}}f_{S,C}^{n+1}$ under the Lebesgue measure. We know, $\frac{1}{(n+1)!}\int_{[0,1]^{n+1}} s_N = E[tr(X_{N,S}^C)] - \varepsilon_N$, where $\varepsilon_N$ represents normalised number of paths with non zero mean which are not polar constraint, and hence $\varepsilon_N \to 0$ as $N \to \infty$ by Lemma \ref{lem4}. We have to normalize $\int_{[0,1]^{n+1}} s_N$ by $\frac{1}{(n+1)!}$ because the function $s_N(x_1,...,x_{n+1})$ takes the same value on any permutation of the input variables $\{x_1,...,x_{n+1}\}$, which amounts to over counting of constraint paths by a multiplicative factor of $(n+1)!$. Hence, $E[tr(X_{N,S}^C)] \to \frac{1}{(n+1)!}\int_{[0,1]^{n+1}}f_{S,C}^{n+1}$. 
\end{proof}

\begin{remark}\label{rem5}
In the proof of the Corollary \ref{as}, for a term of the form $X_{i_1,i_2}^{C(1)}...X_{i_{2n},i_{1}}^{C(2n)} - E[X_{i_1,i_2}^{C(1)}...X_{i_{2n},i_{1}}^{C(2n)}]$, we will refer to  $i_1,...,i_{2n}$ as its colors.
\end{remark}

\begin{corollary} \label{as}
Given a configuration C of length $2n$ and a pattern S, asymptotic starred moments for the approximating matrix of pattern S with configuration C exists and are equal to $ lim_{N \to \infty} tr(X_{N,S}^C)$ $= \frac{1}{(n+1)!}\int_{[0,1]^{n+1}}f_{S,C}^{n+1}(x_1,x_2,...,x_{n+1})d\mathbf{x}$ almost surely.
\end{corollary}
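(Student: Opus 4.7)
The plan is to upgrade the convergence in expectation from Theorem \ref{mainthm} to almost-sure convergence via a variance bound and the Borel--Cantelli lemma. Set $Y_N := tr(X_{N,S}^C) - E[tr(X_{N,S}^C)]$. The goal is to show $\operatorname{Var}(Y_N) = O(1/N^2)$; then Chebyshev's inequality gives $\sum_N P(|Y_N| > \varepsilon) < \infty$ for every $\varepsilon > 0$, so $Y_N \to 0$ almost surely by Borel--Cantelli, and combining with Theorem \ref{mainthm} yields $tr(X_{N,S}^C) \to \frac{1}{(n+1)!}\int_{[0,1]^{n+1}} f_{S,C}^{n+1}\, d\mathbf{x}$ almost surely.

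Expanding,
\[
\operatorname{Var}(Y_N) = \frac{1}{N^{2(n+1)}} \sum_{p_1,p_2} \operatorname{Cov}(T_{p_1},T_{p_2}),
\]
where $T_p := X_{i_1,i_2}^{C(1)}\cdots X_{i_{2n},i_1}^{C(2n)}$ is the product term associated to the path $p=(i_1,\ldots,i_{2n})$. Because the entries are mean zero and independent, $\operatorname{Cov}(T_{p_1},T_{p_2})$ vanishes unless $p_1$ and $p_2$ share at least one random variable, and in that case each distinct random variable must appear with even multiplicity across the combined family of $4n$ edges.

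The key combinatorial step is a bound of $O(N^{2n})$ on the number of contributing pairs. Let $G$ be the multigraph on the colors visited by $p_1\cup p_2$, with one vertex per distinct color and one (multi)edge per distinct random variable used, and write $V,E$ for its vertex and edge counts. The even-multiplicity condition gives $E\le 2n$. If $G$ is a tree, then any closed walk on $G$ traverses every edge an even number of times, so each $p_i$, being closed of length $2n$, uses at most $n$ distinct edges; together with the shared-edge requirement this forces $E\le 2n-1$, hence $V=E+1\le 2n$. If $G$ contains a cycle, then $V=E+1-c\le 2n+1-c\le 2n$. In either case $V\le 2n$, so the number of contributing pairs is $O(N^{2n})$. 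Each $\operatorname{Cov}$ term is uniformly bounded by the finite-moment hypothesis on the entries, so $\operatorname{Var}(Y_N)=O(N^{2n}/N^{2(n+1)})=O(1/N^2)$, as required.

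The main obstacle is the combinatorial bookkeeping of the last paragraph, in particular the case analysis distinguishing trees from multigraphs with cycles, and verifying that the shared-edge requirement is correctly exploited alongside the even-multiplicity constraint; this is the natural analogue for pairs of paths of the single-path wedge-removal argument used in Lemma \ref{lem4}. Once the bound $V\le 2n$ is in hand, Chebyshev and Borel--Cantelli complete the proof immediately.
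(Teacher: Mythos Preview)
Your proposal is correct and follows the same high-level strategy as the paper: bound $\operatorname{Var}(tr(X_{N,S}^C))$ by $O(1/N^2)$, then apply Chebyshev and Borel--Cantelli to upgrade Theorem \ref{mainthm} to almost-sure convergence.

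Where you differ is in the combinatorial bound on the number of contributing pairs. The paper argues by a two-case split: in Case 1 (both paths social) it counts colours directly, using that each social path has at most $n+1$ colours and a shared random variable forces at least two common colours; in Case 2 (at least one path non-social) it does a more delicate accounting of how many colours are introduced by paired versus unpaired random variables in each term. Your argument instead builds the multigraph $G$ on the union of the two paths and runs a tree/cycle dichotomy: in the tree case a closed walk must traverse each edge evenly, giving $\le n$ distinct edges per path and hence $V\le 2n$ once a shared edge is accounted for; in the cyclic case $V\le E\le 2n$ directly. This is a genuinely cleaner packaging --- it avoids the paper's somewhat ad hoc Case 2 bookkeeping and unifies the two cases under one graph-theoretic principle.

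One small correction: ``even multiplicity across the combined family of $4n$ edges'' is stronger than what you actually have, since third (and higher odd) moments of the entries need not vanish. What $\operatorname{Cov}(T_{p_1},T_{p_2})\ne 0$ gives you is that either $E[T_{p_1}T_{p_2}]\ne 0$ (every variable appears at least twice in the combined product) or $E[T_{p_1}]E[T_{p_2}]\ne 0$ (every variable appears at least twice within its own path); in both cases the conclusion $E\le 2n$ that you need still holds, and the rest of your argument goes through unchanged. You should also note explicitly that $G$ is connected (each $p_i$ is a closed walk, hence spans a connected subgraph, and a shared edge joins them), since you use this when invoking $V=E+1-c$.
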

\begin{proof}
Note that the variance of $tr(X_{N,S}^C)$ is given by \\ $\frac{1}{N^{2n+2}} E[(\sum_{1 \leq i_1,...,i_{2n} \leq N} X_{i_1,i_2}^{C(1)}X_{i_2,i_3}^{C(2)}...X_{i_{2n},i_{1}}^{C(2n)} - E[X_{i_1,i_2}^{C(1)}X_{i_2,i_3}^{C(2)}...X_{i_{2n},i_{1}}^{C(2n)}])^2]$. Expanding square will give sum over product of the terms of the form $(X_{i_1,i_2}^{C(1)}...X_{i_{2n},i_{1}}^{C(2n)} - E[X_{i_1,i_2}^{C(1)}...X_{i_{2n},i_{1}}^{C(2n)}])$ and $(X_{j_1,j_2}^{C(1)}...X_{j_{2n},j_{1}}^{C(2n)} - E[X_{j_1,j_2}^{C(1)}...X_{j_{2n},j_{1}}^{C(2n)}])$ (we refer to product of two terms as a \textit{biproduct}). We will show that the number of biproducts with non zero mean is $O(N^{2n})$. Since the mean of each biproduct is bounded above uniformly, we will get that the variance of $tr(X_{N,S}^C)$ decreases on order of $N^2$, which is summable. Hence first Borel Cantelli lemma will give that the convergence of  $tr(X_{N,S}^C)$ is in a.s. sense.\par
To prove our claim, we note that the mean of biproduct of two terms is zero if there is no random variable in common between the two terms. So, we count only the biproducts having at least one random variable in common. \par

\textbf{\underline{Case 1: biproducts with both the terms social}}\\
Each term in biproduct passes through at most $n+1$ colors. Both the terms together can have at most $2n+2$ colors. Since we have a random variable in common, we have at least two colors appearing in both the product terms. Hence, the number of distinct colors appearing is at most $2n$. Total number of ways we can fill $2n$ colors is bounded above by $N^{2n}$. In each fixed color configuration, we can have at most $(2n)^{4n}$ distinct biproducts. Thus, total number of such biproducts is bounded above by $(2n)^{4n}N^{2n}$ which is $O(N^{2n})$.\par
\textbf{\underline{Case 2: biproducts with at least one non social term}}\\
In this case, we have at least one unpaired random variable in one of the terms in the biproduct. If a term (in the biproduct) of length $2n$ has even number of unpaired random variables, say $2k$, then the number of distinct colors in the term is upper bounded by $(n-k) + l$, $l \leq 2k$, where at most $(n-k)$ distinct colors are added by paired random variables and exactly $2l$ distinct colors (distinct from the colors added by paired ones) are added by unpaired random variables (Adding color by a random variable $X_{i_k,i_{k+1}}$ appearing in the term $X_{i_1,i_2}^{C(1)}X_{i_2,i_3}^{C(2)}...X_{i_{2n},i_{1}}^{C(2n)}$ refers to the color $i_{k+1}$ ). \par
In the biproduct of two terms, let the first term has $2k_1$ unpaired random variables adding $l_1$ colors and the other term has $2k_2$ unpaired random variables adding $l_2$ colors, with $l_2 \geq l_1$. So, the first term has at most $(n-k_1)+l_1$ colors and the other one has at most $(n-k_2)+l_2$ colors. All the unpaired random variables in the second term have to appear in the first term so that the mean of the biproduct do not vanish. So, each of the color appearing in unpaired random variables of second term appear in the first term. There are at least $l_2$ such colors. So, we get that total number of distinct colors in unpaired terms is upper bounded by $([(n-k_1)+l_1]+[(n-k_2)+l_2]) - (l_2)$, which equals, $2n-(k_1+k_2)+l_1-1$ which is less than or equal to $2n$ colors. An argument similar to case 1 will show that sum of such product terms is of the order $O(N^{2n})$. A Similar estimate works for odd number of unpaired random variables as well. Hence, we are done.
\end{proof}


\section{Generalization to Several independent pattern matrices and freeness} \label{freeness} 

 We will use the term \textbf{fully filled square matrix} to refer to the approximating matrix of appropriate size corresponding to the square pattern $S=[0,1]^2$. We will observe that the moments of several independent pattern matrices converge asymptotically. We also get that fully filled independent square matrices are free from independent patterned matrices. We will directly carry forward some results from one variable case (section \ref{sec3}) without proof. We start with definitions for several variable case. \par

 Define a \textbf{generalized configuration} $C^q(x_1,x_1^*,...,x_q,x_q^*)$ (denoted by $C^q$) as a finite sequence in formal variables $\{x_1,x_1^*,...,x_q,x_q^*\}$. An example would be $C^4=x_2x_2x_2x_4^*x_1x_3x_3x_3x_1^*x_1^*$.   Let $C_1,...,C_d$ be configurations (as defined in Section \ref{sec2}) of length $n_1,...,n_d$. Let $n=n_1+...+n_d$. We call the tuple $(C_1,...,C_d)$ a \textbf{multi-configuration} of length $n$.\par
Let $S=(S_1,...,S_q)$ be a tuple of patterns. Let $X=(X_{N,S_1},X_{N,S_2},...,X_{N,S_d})$ be the tuple of approximating matrices (as defined in Section 2) such that all the underlying random variables are independent. We define \textbf{mixed moment} of a tuple of random matrices $X$ under the generalized configuration $C^q$ to be equal to  $E[tr(X_{N,S}^{C^q})]$. Here $X_{N,S}^{C^q}= C^q(X_{N,S_1},X_{N,S_1}^*,...,X_{N,S_q},X_{N,S_q}^*)$ is the matrix that we get after substituting every formal variable $x_j$ by the corresponding random matrix $X_{N,S_j}$ and multiplying them accordingly. We claim that the mixed moments converge as $N \to \infty$ for any choice of generalized configuration (Theorem \ref{genmainthm}). The proof of this claim works exactly the same way as one variable. The sketch of proof is given below.\par


Let $C^q$ be a generalized configuration. Then the trace expansion for mixed moments under $C^q$ is given by $E[tr(X_{N,S}^{C^q}] =  E[\sum_{i_1,...,i_d=1}^N X_{i_1,i_2}^{C^q(1)}......X_{i_{n},i_1}^{C^q(n)}]$, where each term of the form $X_{i_1,i_2}^{C^q(1)}......X_{i_{n},i_1}^{C^q(n)}$ corresponds to a unique path say $i:[n+1] \to [N]$, and vise versa. Just like one variable case, only the terms corresponding to social path contribute to the mixed moment (i.e. $E[tr(.)]=0$ for non social paths). Using Lemma \ref{lem2}, we get that if the length of multi-configuration is odd, the asymptotic moment equals 0. If the length of multi-configuration is even say $2n$, then we can infer using lemma \ref{lem1} that only the social paths that pass through exactly $n+1$ colors contribute to the asymptotic mixed moment. Lemma \ref{lem3} and Remark \ref{rem3.0} gives us that such a path should have a strict wedge and should be strict pairing.  \par

We will omit the details, but after minor modification of Lemma \ref{lem4}, we can show that in the trace expansion for the mixed moments, only polar paired paths contribute to asymptotic mixed moment. By polar pairing here, we mean that a random variable which appear in a random matrix $X_{N,S_k}$ should be paired to its complex conjugate appearing in the adjoint matrix $X_{N,S_k}^*$. With this information, we claim that the mixed moments do converge asymptotically, and similar to the one variable case, the moments converge to an integral of a (combinatorial) function. This function will be called the \textbf{generalized path counting function} (it counts polar constraint paths in generalized sense), and is defined almost the same way as the path counting function introduced in Section 3. 

\begin{defn}

    Given a tuple of patterns $S=(S_1,...,S_q)$ and generalized configuration $C^q$ of length $2n$, we define a function in $n+1$ variables called the \textbf{generalized path counting function}, $f_{S,C^q}^{n+1} : [0,1]^{n+1} \to \mathbb{N}$ as follows. For distinct $x_1,...,x_{n+1} \in [0,1]$, $f_{S,C^q}^{n+1}(x_1,...,x_{n+1})$ counts the number of paths $i:[2n+1] \to [n+1]$, with the following constraints, 
    \begin{enumerate}
    \item path $i$ is strictly polar paired and passes through $n+1$ colors.
        \item  for $k \in [2n]$, if $C^q(k)=x_j$ or $C^q(k)=x_j^*$, the ordered pair $(x_{i(k)},x_{i(k+1)}) \in S_j$ if $C^q(k)$ is not starred, and $(x_{i(k+1)},x_{i(k)}) \in S_j$ otherwise.
    \end{enumerate} 
     We set $f_{S,C}^{n+1}(x_1,...,x_{n+1})=0$ if $x_i=x_j$ for $i \neq j$. \\

\end{defn}

With generalized path counting function defined, we can infer that mixed moments converge asymptotically to the normalized integral of the generalized path counting function, using exactly the same method that we used to prove the main theorem, Theorem \ref{mainthm}. We state this as Theorem \ref{genmainthm} below without proof.

\begin{thm}[Generalization of the Main Theorem] \label{genmainthm}
    Given a generalized configuration $C^q$ (length $2n$) and tuple of patterns $S=(S_1,..,S_q)$. The asymptotic mixed starred moments for a square random matrix with generalized configuration $C^q$ and pattern tuple $S$ exists and is equal to $ lim_{N \to \infty} E[tr(X_{N,S}^{C^q})]$ $= \frac{1}{(n+1)!}\int_{[0,1]^{n+1}}f_{S,C^q}^{n+1}(x_1,x_2,...,x_{n+1})d\mathbf{x}$.
\end{thm}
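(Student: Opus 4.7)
The plan is to mirror the proof of Theorem \ref{mainthm} step by step, adapting each stage to account for the fact that entries drawn from distinct matrices $X_{N,S_j}$ are independent. First I would write out the trace expansion
$$E[tr(X_{N,S}^{C^q})] = \frac{1}{N^{n+1}} \sum_{i_1,\ldots,i_{2n}=1}^{N} E\bigl[ X_{i_1,i_2}^{C^q(1)} X_{i_2,i_3}^{C^q(2)} \cdots X_{i_{2n},i_1}^{C^q(2n)} \bigr],$$
and associate to each product term its path $i:[2n+1]\to[N]$ with $i(k)=i_k$, exactly as in Section \ref{sec2}. Independence both across the matrices $X_{N,S_1},\ldots,X_{N,S_q}$ and within each matrix implies that the expectation of a given product vanishes unless every random variable appearing in it is matched with a copy (of itself or its complex conjugate) drawn from the \emph{same} matrix $X_{N,S_j}$.

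Next I would invoke Lemmas \ref{lem1}--\ref{lem3} verbatim: their statements and proofs only concern the combinatorics of paths on $[2n+1]\to[N]$ and never refer to which matrix supplies a given edge, so they still restrict the asymptotically relevant product terms to strictly paired social paths of length $2n$ passing through exactly $n+1$ colors. Then I would prove the analog of Lemma \ref{lem4} in the multi-configuration setting, again by the wedge induction of Remark \ref{rem1}. The base case $n=1$ is immediate since $E[X_{i,j}^{(s)} X_{k,l}^{(t)}] \neq 0$ forces $s=t$ (the two factors live in the same matrix) together with $\{i,j\}=\{k,l\}$ and the two factors being mutual complex conjugates. For the inductive step, locate a strict wedge (guaranteed by Lemma \ref{lem3}); if the two edges of this wedge are not both drawn from the same $X_{N,S_j}$ and polar-matched, then since the wedge is strict those two random variables appear nowhere else in the product, so independence forces the expectation to vanish. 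Otherwise, remove the wedge to obtain a shorter strictly paired social path on one fewer color and apply the inductive hypothesis. This identifies the surviving paths as precisely the generalized polar constraint paths counted by $f_{S,C^q}^{n+1}$.

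With the contributing paths identified, the rest of the argument is a transcription of the proof of Theorem \ref{mainthm}. For each $N$ I would define a simple function $s_N$ on $[0,1]^{n+1}$ that on the cube $B_N(i_1,\ldots,i_{n+1})$ (with all $i_k$ distinct) takes the value equal to the number of generalized polar constraint paths $l:[2n+1]\to[N]$ whose range equals $\{i_1+1,\ldots,i_{n+1}+1\}$, extended by $0$ elsewhere. Each $s_N$ is dominated uniformly by the total number of strict polar pairings of a length-$2n$ word over $n+1$ colors, which is a finite constant independent of $N$; and by construction $s_N \to f_{S,C^q}^{n+1}$ almost everywhere on $[0,1]^{n+1}$, because membership of an ordered pair $(x,y)$ in the appropriate $S_j$ is decided by membership of the corresponding cell $B_N(\cdot,\cdot)$ in $S_j$ in the $N\to\infty$ limit (the boundary of each $S_j$ has Lebesgue measure zero). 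Dominated convergence then gives $\int s_N \to \int f_{S,C^q}^{n+1}$, and, by the generalization of Lemma \ref{lem4}, the normalized count of contributing paths equals $E[tr(X_{N,S}^{C^q})]$ up to a term vanishing as $N\to\infty$. Dividing by $(n+1)!$ to compensate for the symmetry of $s_N$ under permutations of its coordinates yields the stated formula.

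I expect the genuinely new step to be the Lemma \ref{lem4} analog: one must verify cleanly that independence across distinct patterns forces each strict wedge to be an \emph{intra-matrix} polar pair, and that this local constraint combined with the wedge-removal induction forces the global matrix-respecting polar pairing encoded in the definition of $f_{S,C^q}^{n+1}$. The remaining steps (Lemmas \ref{lem1}--\ref{lem3}, the simple-function construction, dominated convergence, and the symmetry factor $1/(n+1)!$) transfer without modification.
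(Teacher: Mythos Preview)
Your proposal is correct and follows essentially the same approach as the paper. The paper itself states Theorem \ref{genmainthm} ``without proof,'' giving only a sketch that matches your outline point for point: invoke Lemmas \ref{lem1}--\ref{lem3} unchanged, prove a minor modification of Lemma \ref{lem4} in which polar pairing is required to be intra-matrix, and then repeat the simple-function/dominated-convergence argument of Theorem \ref{mainthm}; you have in fact supplied more detail on the Lemma \ref{lem4} analog than the paper does.
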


\vspace{5 pt}

In the development of free probability, asymptotic freeness has been studied extensively (see \cite{voiculescu1995free}, \cite{mingo2016freeness}). We have found that the patterned random matrices also admit asymptotic freeness under certain conditions. To state and prove our result, we shall start with a definition. \par

\begin{defn}
Given two paths $i_1:[n_1+1] \to [N]$ and $i_2:[n_2+1] \to [N]$ such that $i_1(k)=i_2(1)=i_2(n_2+1)$, we define \textbf{join} of the two paths at index $k \in [n_1]$ to be a new path $i_1 \wedge_k i_2: [n_1+n_2+1] \to [N]$ given by 
\begin{equation*}
   i_1 \wedge_k i_2(j) =
    \begin{cases}
      i_1(j), &  \text{ if } 1 \leq j \leq k \\
      i_2(j-k+1),  &  \text{ if } k+1 \leq j \leq k+n_2 \\
      i_1(j-n_2),  & \text{ if }  k+n_2+1 \leq j \leq n_1+n_2+1 \\
    \end{cases}
  \end{equation*}
\end{defn}

\begin{remark}
Let $(C_1,...,C_d)$ be multi-configuration. We use the notation $X_{N,S_1}^{C_1}...\widehat{X_{N,S_k}^{C_k}}...X_{N,S_d}^{C_d}$ to denote the product of random matrices $X_{N,S_1}^{C_1},...,X_{N,S_{k-1}}^{C_{k-1}},X_{N,S_{k+1}}^{C_{k+1}},...,X_{N,S_d}^{C_d}$ which excludes $X_{N,S_k}^{C_k}$.
\end{remark}

\begin{remark}\label{rem6}
Given a fully filled square matrix $X_{N,S}$ and a configuration $C$, we have that the number of polar constraint paths $i:[2n+1] \to [N]$ such that $i(k_1)=s_1$ is equal to the number of polar constraint paths with $i(k_2)=s_2$. This number is equal to the total number of constraint paths divided by $N$.
\end{remark}


\begin{thm}[Asymptotic Freeness] \label{freethm}
Let $S^1=(S_1^1,...,S_{q_1}^1)$ and $S^2=(S_1^2,...,S_{q_2}^2)$ be tuples of patterns in $[0,1]^2 \in \mathbb{R}^2$. Let $(C_1,...,C_{d})$ be a multi-configuration of length $2n_1+...+2n_d=2n$. Let $m_{S_{i_1}^{g_1}}^{C_{1}},...,m_{S_{i_d}^{g_d}}^{C_d}$ be the limits of $E[tr(X_{N,S_{i_1}^{g_1}}^{C_{1}})],...,E[tr(X_{N,S_{i_d}^{g_d}}^{C_d})]$ respectively as $N \to \infty$, where $g_k \in \{1,2\}$ for all $k \in [d]$, $g_k \neq g_{k+1}$, $i_k \in [q_{g_k}]$. If $\{X_{N,S_{i_g}^{g}}\}_{g \in [2], i_g \in [q_g]}$ are independent, then, $E[tr((X_{N,S_{i_1}^{g_1}}^{C_1}- m_{S_{i_1}^{g_1}}^{C_1}) (X_{N,S_{i_2}^{g_2}}^{C_2}- m_{S_{i_2}^{g_2}}^{C_2})...(X_{N,S_{i_d}^{g_d}}^{C_d}- m_{S_{i_d}^{g_d}}^{C_d}))] \to 0$ as $N \to \infty$.
\end{thm}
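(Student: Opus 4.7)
My plan is to expand the centered product into a sum indexed by subsets of $[d]$, pass to the limit using Theorem \ref{genmainthm}, and then reduce the resulting alternating sum to a combinatorial vanishing statement established by a peeling argument. Writing $m^{C_k}$ and $X^{C_k}$ as shorthand for $m_{S_{i_k}^{g_k}}^{C_k}$ and $X_{N,S_{i_k}^{g_k}}^{C_k}$, and using that scalars commute with matrices, I expand
$$\prod_{k=1}^d (X^{C_k} - m^{C_k}) \;=\; \sum_{A \subseteq [d]}\Big(\prod_{k \notin A}(-m^{C_k})\Big)\, Y_A,$$
where $Y_A := \prod_{k \in A} X^{C_k}$ in the original $k$-order (with $Y_\emptyset = I$). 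Applying $E\circ tr$ and Theorem \ref{genmainthm}, the left side converges to
$$L \;:=\; \sum_{A \subseteq [d]}(-1)^{d-|A|}\Big(\prod_{k \notin A} m^{C_k}\Big) M_A, \qquad M_A := \lim_{N\to\infty} E[tr(Y_A)].$$
Reindexing by $B = [d]\setminus A$ gives $L = \sum_{B \subseteq [d]}(-1)^{|B|}(\prod_{k \in B} m^{C_k})\, M_{[d]\setminus B} =: \hat M_{[d]}$, where I define the ``connected'' moment $\hat M_A := \sum_{B \subseteq A}(-1)^{|B|}(\prod_{k \in B} m^{C_k})\, M_{A\setminus B}$ by Möbius inversion.

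Next, I would give $\hat M_A$ a combinatorial interpretation: I claim it equals the contribution to $M_A$ coming only from polar constraint paths for $Y_A$ whose induced non-crossing pair partition has no isolated block---a block $k \in A$ being called isolated when all its edges are paired within that same block. Equivalently, this amounts to the factorization $M_A = \sum_{B \subseteq A}(\prod_{k \in B} m^{C_k})\, \hat M_{A\setminus B}$, the path-counting analogue of the standard decomposition of non-crossing partitions by connected components: each isolated block $k$ forms a subtree of the path attached to the remainder at a single pivot color, and by independence its contribution factors out as $m^{C_k}$. With this interpretation in hand, it remains to prove $\hat M_{[d]} = 0$.

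The crux is to prove $\hat M_{[d]} = 0$ by a peeling argument. Let $\pi$ be any valid non-crossing pair partition on the cyclic arrangement of the $2n$ edges (with blocks of lengths $2n_1,\ldots,2n_d$) and assume that no block is isolated in $\pi$. In $\pi$ there is always an innermost pair consisting of two cyclically adjacent edges forming a wedge. If such a wedge straddled the boundary between consecutive blocks $k$ and $k+1$, its two edges would come from independent matrices in distinct groups $g_k \neq g_{k+1}$, contradicting that they are paired. Hence the innermost wedge lies inside a single block; peel it, reducing that block's length by $2$, and iterate. Before any block first reaches length zero, the same alternation argument applies at each step, so every successive peel consumes a within-block pair. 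The first block $k^\ast$ to be reduced to length zero must therefore have had all $n_{k^\ast}$ of its pairs peeled internally, meaning $k^\ast$ was isolated in $\pi$---contradicting the assumption. Thus no such $\pi$ exists, $\hat M_{[d]} = 0$, and $L = 0$. The main obstacle in this plan is the rigorous justification of the combinatorial interpretation of $\hat M_A$ at the level of $\tfrac{1}{(n_A+1)!}\int f^{n_A+1}_{S,C_A}$: the labels of the $n_A + 1$ colors must be disentangled between the isolated-block subtrees and the core while respecting the pattern constraints encoded in $f^{n_A+1}_{S,C_A}$ and the pivot colors shared between each isolated subtree and the rest of the path.
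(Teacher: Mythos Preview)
Your overall strategy---expand, pass to the limit via Theorem~\ref{genmainthm}, reinterpret the alternating sum as a ``connected'' moment $\hat M_{[d]}$ counting polar constraint paths with no isolated block, and then kill $\hat M_{[d]}$ by a wedge-peeling argument---is essentially the same as the paper's, just packaged through M\"obius inversion rather than raw inclusion--exclusion. Your peeling argument is correct and is in fact a cleaner version of the paper's wedge-decomposition $i = i_1 \wedge_{(2n_1+\cdots+2n_{k-1}+1)} i_2$.

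However, the obstacle you flag at the end is a genuine gap, not a technicality, and the paper's proof resolves it using an ingredient you never invoke. The identity $M_A = \sum_{B\subseteq A}\bigl(\prod_{k\in B} m^{C_k}\bigr)\hat M_{A\setminus B}$ requires that when block~$k$ is isolated, the subtree for $X^{C_k}$ attached at a pivot color~$c$ contributes a factor that is \emph{independent of~$c$}. Concretely, you need the number of polar constraint paths $i_2:[2n_k+1]\to[N]$ for $X^{C_k}$ with $i_2(1)=c$ to be the same for every~$c$, namely $1/N$ times the total number of such paths. For a general pattern $S_k$ this is false: the pattern constraint $(x_{i_2(1)},x_{i_2(2)})\in S_k$ makes the count genuinely depend on~$c$. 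It holds precisely for the fully filled square pattern $S=[0,1]^2$; this is the content of Remark~\ref{rem6}.

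The paper's proof uses this as follows. The theorem is stated (and only true) under the implicit hypothesis that one of the two families, say~$S^2$, consists of fully filled squares. Then at the decomposition point either block~$k$ itself is a square (Case~1: apply Remark~\ref{rem6} directly to~$i_2$), or its two neighbours in $i_1$ are squares (Case~2: the pivot index of~$i_1$ sits between two square blocks, so Remark~\ref{rem6} applies on the $i_1$ side instead). Either way the factorization goes through. Your argument, as written, would purport to prove asymptotic freeness of \emph{any} two independent families of patterned matrices, which is not what the theorem asserts and is not true in general. To repair your proof, insert the square hypothesis and use Remark~\ref{rem6} at the ``pivot colors shared between each isolated subtree and the rest of the path'' step you left unresolved.
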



\begin{proof}
We expand $\lim_{N \to \infty} E[tr((X_{N,S_1^{g_1}}^{C_1}- m_{S_1^{g_1}}^{C_1})...(X_{N,S_d^{g_d}}^{C_d}- m_{S_d^{g_d}}^{C_d}))]$ as 
\begin{equation}\label{sum1}
    \lim_{N \to \infty} E[tr(\sum_{l=0}^d \sum_{A \subset [d], |A|=l} (-1)^{d-l} (\Pi_{i \in A}X_{N,S_i^{g_i}}^{C_i})(\Pi_{j \in A^c}m_{N,S_j^{g_j}}^{C_j}))].
\end{equation}
If we remove the term $X_{N,S_1^{g_1}}^{C_1}X_{N,S_2^{g_2}}^{C_2}...X_{N,S_d^{g_d}}^{C_d}$ from the above sum, the remaining sum follows inclusion-exclusion principle. We claim that the asymptotic moment of $X_{N,S_1}^{C_1}X_{N,S_2}^{C_2}...X_{N,S_d}^{C_d}$ is equal to negative of the asymptotic moment of remaining sum above. \par

To prove the above claim, note that only the polar constraint paths contribute to the asymptotic moment of $X_{N,S_1^{g_1}}^{C_1}X_{N,S_2^{g_2}}^{C_2}...X_{N,S_d^{g_d}}^{C_d}$. We know that each polar constraint path has a wedge. This gives us that for every polar constraint path $i:[2n+1] \to [N]$, we can decompose the path as join of two polar constraint paths $i_1:[2n-2n_k +1] \to [N]$ and $i_2:[2n_k+1] \to [N]$, i.e., $i= i_1 \wedge_{2n_1+...+2n_{k-1}+1} i_2 $ (because otherwise if we keep on removing the wedge recursively, we will reach in finite steps to a polar constraint path which has no wedges, a contradiction). Note that the path $i_1$ comes from trace expansion of the product $X_{N,S_1^{g_1}}^{C_1}...\widehat{X_{N,S_k^{g_k}}^{C_k}}...X_{N,S_d^{g_d}}^{C_d}$, while $i_2$ comes from $X_{N,S_k^{g_k}}^{C_k}$. There are two cases, either $X_{N,S_k^{g_k}}$ is a fully filled square matrix (i.e. $g_k = 2$) or $X_{N,S_k^{g_k}}$ is some other random matrix (i.e. $g_k=1)$.\par 

\textbf{\underline{Case 1: $X_{N,S_k^{g_k}}$ fully filled square matrix}}\\
We will count the total number of polar constraint paths in trace expansion of $X_{N,S_1^{g_1}}^{C_1}X_{N,S_2^{g_2}}^{C_2}...X_{N,S_d^{g_d}}^{C_d}$ of the form $i= i_1 \wedge_{(2n_1+...+2n_{k-1}+1)} i_2$ such that $i_1:[2n-2n_k +1] \to [N]$ and $i_2:[2n_k+1] \to [N]$. For a polar constraint path $i_1$ in trace expansion of $X_{N,S_1^{g_1}}^{C_1}...\widehat{X_{N,S_k^{g_k}}^{C_k}}...X_{N,S_d^{g_d}}^{C_d}$, we can join all polar constraint paths $i_2$ coming from the trace expansion of $X_{N,S_k^{g_k}}^{C_k}$ with the restriction that $i_2(1)=i_2(2n_k+1)=i_1(2n_1+...+2n_{k-1}+1)$. Using Remark \ref{rem6}, we know that total number of such paths $i_2$ is given by the total number of polar constraint paths in trace expansion of $X_{N,S_k^{g_k}}^{C_k}$ divided by N. So, the total number of polar constraint paths of the form $i= i_1 \wedge_{(2n_1+...+2n_{k-1}+1)} i_2$ is given by product of polar constraint paths in $tr(X_{N,S_1^{g_1}}^{C_1}...\widehat{X_{N,S_k^{g_k}}^{C_k}}...X_{N,S_d^{g_d}}^{C_d})$ multiplied with polar constraint paths in $tr(X_{N,S_k^{g_k}}^{C_k})$ divided by N. As $N \to \infty$, the total number of such paths after normalizing by $\frac{1}{N^{n+1}}$ equals $\lim_{N \to \infty} E[tr(X_{N,S_1^{g_1}}^{C_1}...\widehat{X_{N,S_k^{g_k}}^{C_k}}...X_{N,S_d^{g_d}}^{C_d})](m_{S_{i_k}^{g_k}}^{C_k})$. This is one of the terms in (\ref{sum1}). \par

\textbf{\underline{Case 2: $X_{N,S_k^{g_k}}$ not fully filled square matrix}}\\
In this case, when the matrix at $k^{th}$ position is not a fully filled square, we have that its neighbouring matrices have to be square. Using this fact, we can again decompose the polar constraint path as join of two paths $i= i_1 \wedge_{(2n_1+...+2n_{k-1}+1)} i_2$ in the same way as in case 1, and we get the same asymptotic estimate by using Remark \ref{rem6}. \par

\vspace{4pt}
 From cases 1 and 2 combined, summing over all $k \in [d]$ for decomposed polar constraint paths of the form $i= i_1 \wedge_{(2n_1+...+2n_{k-1}+1)} i_2$ and taking the limit gives $\lim_{N \to \infty} \sum_k E[tr(X_{N,S_1^{g_1}}^{C_1}...\widehat{X_{N,S_k^{g_k}}^{C_k}}...X_{N,S_d^{g_d}}^{C_d})(m_{S_{i_k}^{g_k}}^{C_k})]$. Every polar constraint path has such a decomposition, but the decomposition is not unique as the same path can be written as join of two paths at different indices $k$. This leads to over counting. Hence, by using inclusion exclusion principle, and using similar arguments as above, we get that the exact number of polar constraint paths (normalized by $N^{n+1}$) ) is given, asymptotically, by the negative of the sum of all the terms in (\ref{sum1}) other than the term $\lim_{N \to \infty} E[tr(X_{N,S_1^{g_1}}^{C_1}X_{N,S_2^{g_2}}^{C_2}...X_{N,S_d^{g_d}}^{C_d})]$. $\lim_{N \to \infty} E[tr(X_{N,S_1^{g_1}}^{C_1}X_{N,S_2^{g_2}}^{C_2}...X_{N,S_d^{g_d}}^{C_d})]$ counts all polar constraint paths asymptotically (normalized by $N^{n+1}$). Therefore sum (\ref{sum1}) equals 0 asymptotically.

\end{proof}

\section{Further results and Discussion}     \label{sec5}

\subsection{Relation between configuration and moments} \label{sec4.1}

Let $C$ be a configuration of length $m$. Lemma \ref{lem2} shows that if $m$ is odd, then the asymptotic moments of the approximating matrix under given configuration are zero. For non zero asymptotic moments, the length of the configuration has to be even, say $2n$. Remark \ref{rem3} tells us that the moment is non zero only if the star and non star entries in the configuration appear an equal number of times (see Figure \ref{fig:sfig4}). We write it as theorem

\begin{figure}
  
  \includegraphics[width=12cm, height=5.5cm]{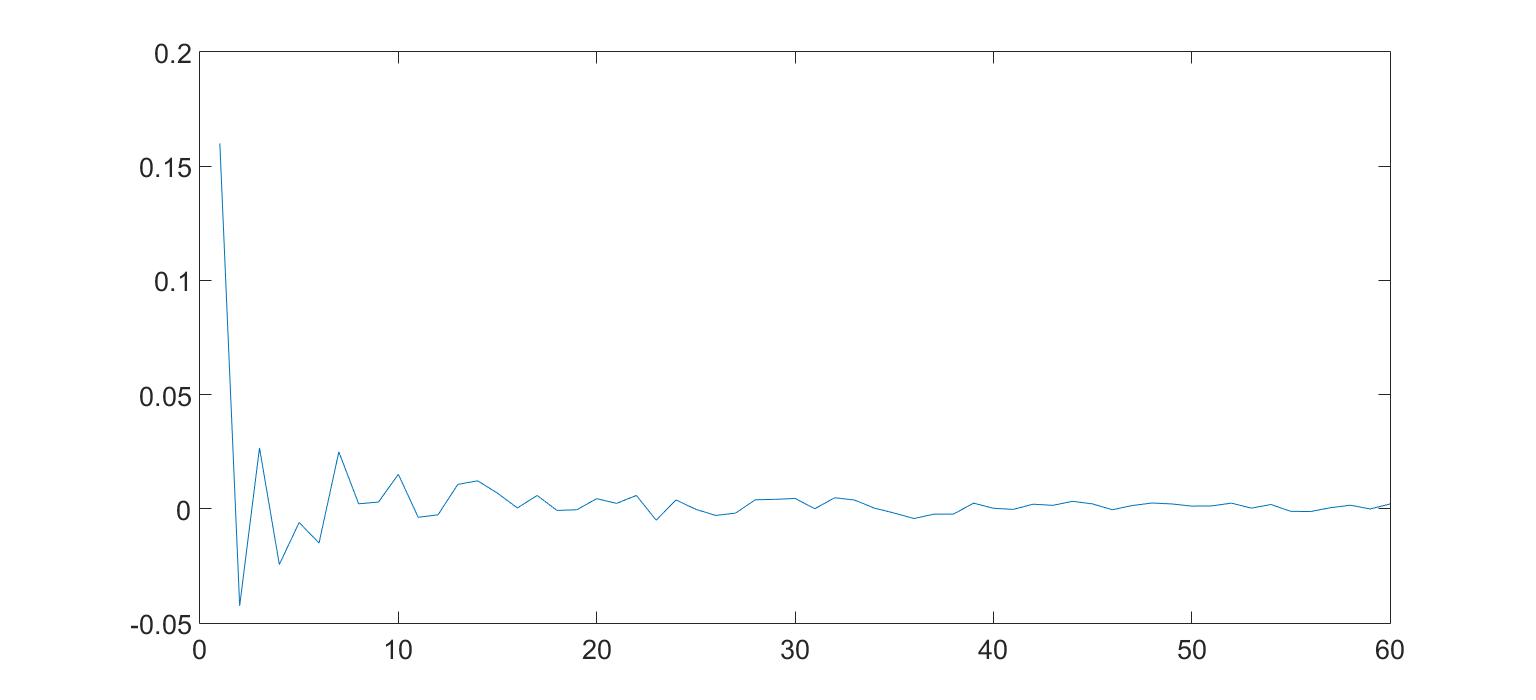}
  \caption{Graph of the moments of (a realization of) approximating matrices of union of three discs pattern (see Figure \ref{fig:fig}) under configuration $xx^*x$. X axis represents $\frac{1}{10}$ the size of approximating matrix and Y axis gives the corresponding moment of the approximating matrix of given size. From the graph, moments can be seen to be converging to 0. This configuration has unequal number of starred and unstarred entries.}
  \label{fig:sfig4}
\end{figure}

\begin{thm}
    Given a configuration and a pattern S with non empty interior, if the number of star entries in the configuration is different from the unstarred entries, then the asymptotic moment $\lim_{N \to \infty} E[tr(X_{N,S}^C)]$ is zero.  
\end{thm}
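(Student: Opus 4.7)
The plan is to derive this as an essentially immediate corollary of Lemma \ref{lem4}. First I would dispatch the case of odd configuration length $m$: Lemma \ref{lem2} already delivers $\lim_{N \to \infty} E[tr(X_{N,S}^C)] = 0$ in that setting regardless of the starred/unstarred balance, so nothing further is needed there. The substantive case is when $m = 2n$ is even but the number of $x^*$ entries in $C$ differs from the number of $x$ entries.

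For this case, the key (and essentially only) observation is that the starred or unstarred label of the $k$-th edge of \emph{any} path $i:[2n+1] \to [N]$ derived from a product term in the trace expansion of $X_{N,S}^C$ is dictated entirely by $C(k)$: by the definitions in Section \ref{sec2}, the $k$-th edge is declared starred precisely when $C(k) = x^*$. It follows that, for every such path, the total number of starred edges equals the number of $x^*$ entries in $C$, and the total number of unstarred edges equals the number of $x$ entries in $C$; these totals are properties of $C$ alone, independent of the particular choice of colors $i_1, \ldots, i_m$.

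The next step is to recall the definition of a polar paired path, which requires the total number of starred edges to equal the total number of unstarred edges. Under our hypothesis these two counts are unequal in $C$, so no path in the trace expansion of $X_{N,S}^C$ can possibly be polar paired; in particular the set of polar constraint paths is empty. Applying Lemma \ref{lem4}, which asserts that only polar constraint paths contribute to the asymptotic moment, then forces $\lim_{N \to \infty} E[tr(X_{N,S}^C)] = 0$.

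I do not expect any genuine obstacle here, as the argument is simply an unpacking of definitions combined with Lemma \ref{lem4}. The one subtle point worth flagging is that the non-empty-interior hypothesis on $S$ plays no role in the reasoning above; presumably it is included in the statement only to rule out the degenerate case where $X_{N,S}$ is identically the zero matrix (and every moment is then trivially zero anyway).
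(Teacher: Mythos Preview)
Your proposal is correct and follows essentially the same approach as the paper: the paper disposes of odd length via Lemma~\ref{lem2} and, for even length, appeals to Remark~\ref{rem3} (i.e., to Lemma~\ref{lem4} and the polar pairing requirement) exactly as you do. Your remark about the non-empty-interior hypothesis is also on point---in the paper it is used only for the companion statement that the $xx^*$ moment is nonzero, not for the vanishing direction.
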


If the pattern $S$ has non empty interior, then the asymptotic starred moment corresponding to the configuration $xx^*$ is always non zero. For the proof, pick an interior point say $z \in S$. There will be a non zero Lebesgue measure rectangle  $[a_1,a_2] \times [b_1,b_2] \subset S$ containing $z$. On this rectangle, the path counting function $f_{S,xx^*}^{1+1}(x_1,x_2)$ is bounded below by 1 for all $x_1\in [a_1,a_2]$ and $x_2 \in [b_1,b_2]$. Since, $f_{S,xx^*}^{1+1}(x_1,x_2)$ is non negative on its domain, and is bounded below by 1 on a non zero measure set, we have that  $\frac{1}{(n+1)!}\int_{[0,1]^{n+1}}f_{S,xx^*}^{1+1}(x_1,x_2)d\mathbf{x}$ is non zero, which is equal to the asymptotic moment by the main theorem.



\subsection{Combinatorics: Triangular and fully filled square matrices}

The function $f_{S,C}^{n+1}$ is constant on its domain for triangular and fully filled square matrices. This means that the asymptotic moments converge to 
$\frac{1}{(n+1)!}f_{S,C}^{n+1}$. The function $f_{S,C}^{n+1}$ counts the number of constraint paths on $2n+1$ by $n+1$ grid. So, if we can calculate the moments of the random matrix using alternate methods, like using methods from free probability (see \cite{dykema}), we can get hold on the combinatorial quantity given by the function $f_{S,C}^{n+1}$. \\

For triangular matrices, this combinatorial quantity has been explicitly computed using tools from free probability (\cite{dykema},\cite{sniady}) for certain configurations. For example, in \cite{dykema}, they count moments for configurations of the form $xx^*xx^*xx^*..xx^*$, where $xx^*$ repeats $n$ times. This number turns out to be $n^n/(n+1)!$. In \cite{sniady}, we see that for configuration of the form  $(x...xx^*...x^*)...(x...xx^*...x^*)$, where the block $x...xx^*...x^*$ appears $n$ times, and the block consists of $x$ appearing $k$ times consecutively followed by same number of appearances of $x^*$, the value of the path counting function turns out to be $n^{nk}/(nk+1)!$. \\

For fully filled square matrices, it is well known that under the configuration $(xx^*)^n$, the moments equals $n^{th}$ moment of Marchenko–Pastur law. We explicitly know the limiting empirical spectral distribution (ESD) in case of iid entries for such matrix (see\cite{tao}). 


\subsection{Pattern and base change}

We call a pattern $S_1$ to be similar to pattern $S_2$ if the approximating matrices for $S_1$ are similar to the approximating matrices for pattern $S_2$ infinitely often as the size of matrices increases. Since trace is basis independent, we get the same moments for similar matrices. Hence, the moments corresponding to similar patterns are the same. \par

A natural question is to characterize similar patterns (for example, existence of kernel functions relating two patterns) without going to the level of approximating matrices. Another question would be to understand if similar shapes produce a unique collection of starred moments. That is, if two patterns are not similar, do we always get non identical starred moments.\\

\subsection{Approximating matrices for patterns with non zero measure boundary}

Our construction for approximating matrix using interior points works well for patterns with zero measure boundary. If the boundary has non zero measure, our main theorem (as stated in this paper) will not hold true. This is because the approximating matrix will not be a good representation of the given pattern if we follow our method. It remains an open problem to find a way to create more sensible approximating matrix out of the given pattern, which may have non zero measure boundary. One possibility is to use the theory of fractals in this context.


\subsection{Moment reconstruction}

The number of paired paths from $[2n+1]$ to $[n+1]$ is bounded above by $K(n+1)^{n+1}$, where $K$ is a universal constant. This tells us that the path counting function is bounded by $k(n+1)^{n+1}$ for a configuration of length $2n$. Using the Stirling approximation, and the fact that the moments are given by $\frac{1}{(n+1)!}\int_{[0,1]^{n+1}}f_{S,C}^{n+1}$, we conclude that the moments of random matrix under configuration of length $2n$ increases exponentially i.e. moments are bounded by $Ce^n$, for some constant $C$. This gives us that the moments follow the Carleman's condition, and hence we have a unique measure with those moments. Also, the growth rate gives us that the measure has a bounded support. This tells us that we can find an operator in a type II$_1$ factor with the same starred moments as of the limiting moments of the random matrix. These limiting operators are referred to as distribution limits of the given sequence of random matrix. In Dykema's work (see \cite{dykema}), they showed the existence of such operators concretely for triangular matrices and called them the DT operators.



	\bibliographystyle{unsrt}
	\bibliography{main}	

\end{document}